\DeclareMathOperator{\modulo}{mod}
\renewcommand{\mod}{\modulo}
\newcommand{\pe}{\mathfrak{p}}
\newcommand{\Ypre}{Y^{\mathrm{pre}}}
\newcommand{\Xpre}{X^{\mathrm{pre}}}
\newenvironment{parts}[0]{%
    \noindent
    \begin{enumerate}[leftmargin=2em]%
   }%
  {\end{enumerate}}
\numberwithin{equation}{section} 
\begin{document}

\title[Rational iterated preimages under unicritical polynomials]{The number of rational iterated preimages of the origin under unicritical polynomial maps}

\author[K. Sano]{Kaoru Sano}
\address{NTT Institute for Fundamental Mathematics, NTT Communication Science Laboratories, NTT, Inc., 2-4, Hikaridai, Seika-cho, Soraku-gun, Kyoto 619-0237, Japan}
\email{kaoru.sano@ntt.com}

\date{\today}


\begin{abstract}
We study rational iterated preimages of the origin under unicritical maps $f_{d,c}(x)=x^d+c$. Earlier works of Faber--Hutz--Stoll and Hutz--Hyde--Krause established finiteness and conditional bounds in the quadratic case. Building on this, we prove that for $d=2$ and $c \in \mathbb Q\setminus\{0,-1\}$ there are no rational fourth preimages of the origin, and for all $d \geq 3$ there are no rational second preimages outside trivial cases.
The proof relies on geometric analysis of preimage curves, the elliptic Chabauty method, and Diophantine reduction.
As a result, we determine the number of rational iterated preimages of $0$ under $f_{d,c}$ for all $d\geq 2$.
\end{abstract}

\maketitle


\tableofcontents



\section{Introduction} \label{sec: intro}

Let $L$ be a number field.
Fix an integer $d\geq 2$ and $c \in L$, and define an endomorphism $f_{d,c}$ of affine line by
\[
    f_{d,c} \colon \bbA^1 \longrightarrow \bbA^1;\quad x \mapsto x^d +c.
\]
For $a\in L$, the rational iterated preimage of $a$ is the set
\[
    \bigcup_{N\geq 1} f_{d,c}^{-N}(a)(L) = \{x\in \bbA^1(L)\mid f_{d,c}^{\circ N}(x) = a \text{ for some }N\geq 1\},
\]
where $f_{d,c}^{\circ N}$ is the $N$-th iterate of $f_{d,c}$, and $f_{d,c}^{-N}(a)$ is the preimage of $a$ under $f_{d,c}^{\circ N}$.
Finding an $L$-rational iterated preimage of $a$ under $f_{d,c}$ amounts to solving complicated polynomial equations.
When we vary $c$, the problem is reduced to determining $L$-rational points $(c,x)$ of the preimage curves $\Ypre(d, N, a)$ defined by $f_{d,c}^{\circ N}(x) - a = 0$, or its normal projective model $\Xpre(d, N, a)$.
To study the uniformity of the number of rational points of such curves, define the quantities
\begin{align*}
    \kappa(d,a, L) &\coloneqq \sup_{c\in L}\left\{ \#\bigcup_{N\geq 1}f_{d,c}^{-N}(a)(L)\right\}\\
    \overline{\kappa}(d,a,L) &\coloneqq \limsup_{c\in L}\left\{ \#\bigcup_{N\geq 1}f_{d,c}^{-N}(a)(L)\right\}
\end{align*}
for an integer $d \geq 2$ and a point $a\in \bbA^1(L)$.
In \cite{FHIJMTZ09} and \cite{HHK11}, they deal with the uniformity of the number of rational iterated preimages under quadratic polynomial maps $f_{2,c}$.
\begin{thm}
    \begin{parts}
        \item \cite[Theorem 1.2 for $B=1$]{FHIJMTZ09}
            For all $a\in \bbA^1(L)$,
            the quantity $\kappa(2, a, L)$ is finite.
        \item \cite[Theorem 1.3]{HHK11}
            There is a finite set $S\subset \overline{\Q}$ such that we have
        \[
            \overline{\kappa}(2,a,L) =
            \begin{cases}
                10  & \text{if } a = -1/4,\\
                6   & \text{if } a \text{ is one of the three third critical values},\\
                4   & \text{if } a \in S\cap L,\\
                6   & \text{otherwise}.
            \end{cases}
        \]
    \end{parts}
\end{thm}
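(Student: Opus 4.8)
The plan is to reconstruct the two cited ingredients separately. For part (a), finiteness of $\kappa(2,a,L)$, I would organize everything around the tree of rational iterated preimages together with a genus dichotomy. First I would dispose of a single fixed $c$ by a height argument: any $x \in f_{2,c}^{-N}(a)(L)$ satisfies $\hat{h}_{f_{2,c}}(x) = 2^{-N}\hat{h}_{f_{2,c}}(a)$, so its Weil height is at most $\hat{h}_{f_{2,c}}(a) + C(c)$, a bound independent of the depth $N$; Northcott's theorem then makes $\bigcup_N f_{2,c}^{-N}(a)(L)$ finite. The real content is to make the count uniform in $c$, since $C(c)$ depends on $c$.

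To obtain uniformity I would study the preimage curves $\Xpre(2,N,a)$ in the $(c,x)$-plane, compute their genera, and isolate the smallest $N_0$ for which $\Xpre(2,N,a)$ has genus $\geq 2$; this threshold exists because the degree-$2$ projections $\Xpre(2,N,a)\to\Xpre(2,N-1,a)$ force the genus upward via Riemann--Hurwitz. Since any rational preimage of depth $N\geq N_0$ projects under iteration to a rational point of $\Xpre(2,N_0,a)$, Faltings' theorem confines the parameters $c$ admitting a preimage of depth $\geq N_0$ to one finite set $\mathcal C$. For $c\notin\mathcal C$ the rational subtree has depth $<N_0$ and hence at most $2^{N_0}$ nodes, while each of the finitely many $c\in\mathcal C$ contributes a finite count by the height argument; the maximum of these gives $\kappa(2,a,L)<\infty$.

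For part (b) I would pass from ``all but finitely many $c$'' to ``infinitely many $c$'': a prescribed shape of rational preimage tree is realized for infinitely many $c$ exactly when the curve cutting out that configuration in $(c,\text{preimages})$-space has genus $0$ with an $L$-point, or genus $1$ with positive Mordell--Weil rank over $L$. The strategy is then to enumerate the finitely many relevant tree shapes up to the depth forced by the genus bound of part (a), compute the genus of each associated curve, and retain only those of genus $\leq 1$ carrying infinitely many rational points. The lim sup $\overline{\kappa}(2,a,L)$ is the largest preimage count supported by such a persistent configuration, giving $6$ for generic $a$; the deviations — the value $10$ at $a=-1/4$, the value $6$ at the three third critical values, and the drop to $4$ on the finite set $S$ — arise from coincidences that create or destroy genus-$\leq 1$ configurations for those special $a$.

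The hard part will be the genus-$1$ analysis: for each borderline configuration one must decide whether the associated elliptic curve has infinitely many $L$-rational points, and pin down precisely the exceptional set $S$ where it does not. This is exactly the step where an elliptic Chabauty or descent computation is unavoidable, and it is what makes the quadratic picture delicate — and what the present paper sharpens in the case of preimages of the origin.
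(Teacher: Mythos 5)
This theorem is not proved in the paper at all: it is quoted as background with pointers to \cite{FHIJMTZ09} and \cite{HHK11}, so there is no internal proof to compare your attempt against. Judged on its own terms, your reconstruction does follow the strategy of those two sources: for (a), the canonical-height identity $\hat h_{f_{2,c}}(x)=2^{-N}\hat h_{f_{2,c}}(a)$ plus Northcott to handle each fixed $c$, and the preimage curves $\Xpre(2,N,a)$ together with Faltings to make the depth bound uniform in $c$; for (b), the dictionary between tree configurations persisting for infinitely many $c$ and components of genus $\le 1$ carrying infinitely many $L$-points. These are the actual mechanisms in the cited papers, and the genus-$1$/genus-$5$ dichotomy for $\Xpre(2,3,0)$ and $\Xpre(2,4,0)$ recalled in the preliminaries is precisely the $a=0$ instance of your threshold $N_0$.

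Two places where your sketch conceals the real work. First, in (a) the Riemann--Hurwitz/Faltings step needs $\Xpre(2,N_0,a)$ to be geometrically irreducible (or at least to have no component of geometric genus $\le 1$) for the particular $a$ at hand; for special values of $a$ the preimage curves degenerate, and this is exactly where the exceptional cases in (b) (the value $a=-1/4$, the third critical values, the finite set $S$) originate. A threshold $N_0$ valid for \emph{all} $a$ therefore requires the case analysis that occupies most of \cite{FHIJMTZ09}, not just generic Riemann--Hurwitz. Second, in (b) you attribute the generic value $6$ to a genus-$1$ curve of positive Mordell--Weil rank over $L$; since the final answer outside $S\cap L$ is independent of $L$ and $S$ is a finite subset of $\overline{\Q}$, one cannot decide this fiber by fiber over $L$ --- one needs the relevant elliptic fibration over the $a$-line to have positive generic rank together with a specialization argument to control the exceptional set. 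These are omissions of content rather than wrong steps; as a plan your proposal is consistent with how the cited results are actually established.
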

The explicit value $\kappa(2,0,\Q)$ is conditionally given in \cite{FHS11}.
\begin{thm}[{\cite[Theorem 1.3]{FHS11}}]\label{thm: FHS}
    Suppose that for $c\in \Q\setminus\{0,-1\}$ the morphism $f_{2,c}$ admits no rational fourth preimages of $0$.
    Then $\kappa(2,0,\Q)= 6$.
\end{thm}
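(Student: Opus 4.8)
The plan is to study the rational backward-orbit tree of the origin under $f \coloneqq f_{2,c}$ and to translate each branching condition into a square condition defining an auxiliary curve. The key observation is that $\beta \in \bbA^1(\Q)$ has a rational $f$-preimage exactly when $\beta - c$ is a nonzero square in $\Q$, in which case its two preimages are $\pm\sqrt{\beta - c}$. Applied to the root, this forces $-c$ to be a nonzero square whenever $f^{-1}(0)(\Q) \ne \emptyset$, so I may assume $c = -a^2$ with $a \in \Q^{\times}$ and first preimages $\{a,-a\}$; every other $c$ has empty first level, while $c \in \{0,-1\}$ (the values making $0$ periodic, namely $a \in \{0,\pm 1\}$) are checked by hand to give the counts $1$ and $3$. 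For $c \ne 0,-1$ the origin is not periodic, so the rational backward orbit is a genuine tree, the disjoint union of its levels $L_k \coloneqq \{x : f^{\circ k}(x) = 0,\ f^{\circ j}(x) \ne 0\ \text{for}\ 0 < j < k\}$, of cardinality $\sum_{k \ge 1} \# L_k$.

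The hypothesis that no $f_{2,c}$ with $c \ne 0,-1$ admits a rational fourth preimage of $0$ says $L_4 = \emptyset$ uniformly, so the tree has depth at most three and the count is $\# L_1 + \# L_2 + \# L_3$. Here $\# L_1 = 2$, and setting $u^2 = a^2 + a$, $v^2 = a^2 - a$ the second level is $\{\pm u\} \cup \{\pm v\}$, so $\# L_2 \in \{0,2,4\}$ according to which of $a^2 \pm a$ are squares. To keep the total below seven it then suffices to exclude two configurations: \textbf{(I)} $\# L_2 = 4$ with a nonempty third level, and \textbf{(II)} $\# L_2 = 2$ with a third level of four points. Configuration (I) asserts, along the rational parametrisation of the locus $\{a^2 + a = u^2,\ a^2 - a = v^2\}$, that one of $a^2 \pm u$, $a^2 \pm v$ is a square; configuration (II), whose opposite branch is symmetric under $a \mapsto -a$, asserts the system $a^2 + a = u^2$, $a^2 + u = s^2$, $a^2 - u = t^2$. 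Each cuts out an auxiliary curve, and the task is to show that it has no rational points beyond the degenerate ones coming from $a \in \{0,\pm 1\}$ or $u = 0$. Granting this, $\# L_2 = 4$ forces $\# L_3 = 0$ and $\# L_2 = 2$ forces $\# L_3 \le 2$, so in every case $\# L_1 + \# L_2 + \# L_3 \le 6$ and $\kappa(2,0,\Q) \le 6$.

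For the matching lower bound I would exhibit a single admissible $c$. Completing the square shows $a^2 + a$ is a square precisely along the rational curve $a = (s-1)^2/(4s)$, and the extra condition $a^2 - a = v^2$ reduces to $s^2 - 6s + 1$ being a square, again a conic; hence $\{a^2 + a = u^2,\ a^2 - a = v^2\}$ is rational and carries infinitely many admissible $a$. For $a = 169/120$ one gets $u = 221/120$, $v = 91/120$ and the six distinct rational second preimages $\pm 169/120,\ \pm 221/120,\ \pm 91/120$; a direct check that none of $a^2 \pm u$, $a^2 \pm v$ is a square shows there is no third preimage, so $c = -(169/120)^2$ realises exactly six. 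Therefore $\kappa(2,0,\Q) = 6$.

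The heart of the argument is the rational-point analysis of the curves (I) and (II): their smooth models have positive genus, so bounding their points requires a descent to control the Mordell--Weil ranks followed by the (elliptic) Chabauty method, with care taken to discard the degenerate solutions. What the hypothesis buys is precisely the removal of the genus-two model $\Xpre(2,4,0)$ governing fourth preimages, whose Jacobian is the one not readily amenable to a direct Chabauty argument; the theorem is thus exactly the reduction of the equality $\kappa(2,0,\Q) = 6$ to the triviality of the rational points on that single curve.
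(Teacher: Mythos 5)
This theorem is quoted from \cite{FHS11}; the paper under review gives no proof of it, so your proposal can only be measured against the argument of that reference, whose outline it reproduces. Your reduction is correct: writing $c=-a^2$, the hypothesis kills every level $\geq 4$ (any rational $k$-th preimage with $k\geq 4$ maps under $f^{\circ(k-4)}$ to a rational fourth preimage), the level sizes are $\#L_1=2$, $\#L_2\in\{0,2,4\}$, $\#L_3$ even, and the bound $\leq 6$ amounts exactly to excluding your configurations (I) and (II). The example $a=169/120$ does give $u=221/120$, $v=91/120$ and no third preimages, so the lower bound is complete. (One assertion you should justify is that $0$ is not periodic for rational $c\neq 0,-1$, so that the levels are disjoint: for periods $\leq 3$ this is a rational-root check on $f_{2,c}^{\circ n}(0)$ as a polynomial in $c$, and a period $\geq 4$ would make $0$ itself a rational fourth preimage of $0$, contradicting the hypothesis.)

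The genuine gap is that the entire substance of the upper bound --- the nonexistence of nontrivial rational points on the auxiliary curves cut out by (I) and (II) --- is introduced with ``Granting this'' and never established. These are not formalities. Configuration (II) forces $(st)^2=a^4-a^2-a$ together with $a^2+a=u^2$, and configuration (I) pulls the conditions $a^2\pm u,\,a^2\pm v\in(\Q^\times)^2$ back along your conic parametrization to curves of genus $\geq 2$; determining their rational points (a descent to bound Mordell--Weil ranks, then Chabauty or elliptic Chabauty, plus careful bookkeeping of the degenerate solutions $a\in\{0,\pm1\}$, $u=0$, which genuinely lie on these curves) is precisely where all of the work in the proof of this statement lives. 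Without carrying out, or at least precisely citing, that analysis, the inequality $\kappa(2,0,\Q)\leq 6$ is not proved; what you have is a correct and well-organized reduction rather than a proof. Your closing remark is also slightly off: the hypothesis does not merely ``remove'' the curve $\Xpre(2,4,0)$ from an otherwise complete argument --- the remaining curves still require the nontrivial rational-point computations just described.
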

Moreover, they proved that the morphism $f_{2,c}$ admits no rational fourth preimages of $0$ for $c\in \Q\setminus \{0,-1\}$ under the Birch--Swinnerton-Dyer conjecture for the Jacobian of $\Xpre(2, 4, 0)$.
This paper aims to remove the assumptions of \cref{thm: FHS} and extend it to unicritical polynomial maps of high degrees.

\begin{thm}\label{thm: main}
    The following statements hold.
    \begin{parts}
        \item \label{item: thm: d = 2}
            For a rational number $c\in \Q\setminus\{0,-1\}$, the morphism $f_{2,c}$ admits no rational fourth preimages of $0$.
        \item \label{item: thm: d geq 3}
            For an integer $d \geq 3$ and a rational number $c\in \Q$, assume that $c\neq 0, -1$ if $d$ is even, and assume that $c \neq 0$ if $d$ is odd.
            Then, the morphism $f_{d,c}$ admits no rational second preimages of $0$.
    \end{parts}
\end{thm}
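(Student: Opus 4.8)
The plan is to convert each dynamical statement into a question about rational points on the explicit preimage curves and then to determine those points completely. For a rational fourth preimage $x$ of $0$ under $f_{2,c}$, the entire backward orbit $x\to f_{2,c}(x)\to\cdots\to 0$ must be rational; writing $w$ for the first preimage (so $w^2=-c$) and eliminating the intermediate coordinates, the resulting chain of square conditions collapses to the single relation
\[
    \bigl[(x^2-w^2)^2-w^2\bigr]^2 = w^2+w,
\]
which is an affine model for $\Xpre(2,4,0)$ over the $w$-line. For a rational second preimage under $f_{d,c}$ with $d\geq 3$, setting $y=f_{d,c}(x)$ and using $c=-y^d$ reduces the problem to rational points on the smooth plane curve $C_d\colon x^d=y^d+y$ of genus $(d-1)(d-2)/2$. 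In both cases the degenerate parameters $c\in\{0,-1\}$ correspond to the obvious rational points with $x=0$ (namely $w\in\{0,-1\}$, and $y=0$ together with $y=-1$ when $d$ is even), so the task is to prove that these are the only rational points.

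For \cref{item: thm: d = 2} I would exploit the structure of $\Xpre(2,4,0)$ as an iterated double cover of the $w$-line. The bottom layer $w^2+w=A^2$, where $A=(x^2-w^2)^2-w^2$, is a conic and hence rational; substituting a rational parametrization $w=w(r)$, $A=A(r)$ turns the remaining square conditions into a curve of lower gonality. The key geometric step is to locate in its Jacobian an elliptic quotient $E$ defined over a quadratic field $K$ (produced by the factorization of the quadratic cut out by the next square condition), together with a nonconstant morphism to $E$ over $K$. I would then run Bruin's elliptic Chabauty method over $K$: provided $\operatorname{rank}E(K)<[K:\Q]=2$, the finitely many points whose relevant coordinate is $\Q$-rational can be enumerated, and one checks that each yields $c\in\{0,-1\}$. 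This is exactly the step at which \cref{thm: FHS} was conditional: Faber--Hutz--Stoll required the rank of the full Jacobian of $\Xpre(2,4,0)$ and so invoked the Birch--Swinnerton-Dyer conjecture, whereas passing to an elliptic quotient over $K$ replaces that by an unconditional computation of $\operatorname{rank}E(K)$ by $2$-descent.

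For \cref{item: thm: d geq 3} the base case $d=3$ is immediate: $C_3$ is an elliptic curve, and a Mordell--Weil computation (expecting rank $0$) shows that its only rational points are the two trivial ones. For general $d$ I would clear denominators in $x^d=y^d+y$: writing $x=a/m$, $y=b/m$ in lowest terms gives $a^d=b\,(b^{d-1}+m^{d-1})$ with $\gcd(b,m)=1$, and since the two right-hand factors are then coprime, each is, up to sign, a perfect $d$th power. This Diophantine reduction produces a coprime integer equation of the shape $v^d-w^d=\pm\,m^{d-1}$, that is, a difference of coprime $d$th powers equal to a $(d-1)$st power, which I would attack through the factorization $v^d-w^d=(v-w)\,\Phi(v,w)$ together with local (congruence) obstructions.

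The main obstacle is twofold. In \cref{item: thm: d = 2} everything hinges on the rank inequality $\operatorname{rank}E(K)<2$ demanded by elliptic Chabauty; choosing the elliptic quotient and the field $K$ so that this bound \emph{provably} holds, and then carrying out the descent and the Chabauty computation at a suitable prime of good reduction, is the crux, and it is precisely this that upgrades \cref{thm: FHS} to an unconditional statement. In \cref{item: thm: d geq 3} the difficulty is instead \emph{uniformity in $d$}: the coprime-factorization descent need not terminate for every $d$, so the hard part is to find a single argument--combining that descent with size estimates and congruence conditions--that eliminates all nontrivial solutions of the resulting generalized Fermat--Pillai equation simultaneously for every $d\geq 3$.
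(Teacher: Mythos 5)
Your reduction of both statements to rational points on explicit curves is fine, but in each part the step that actually carries the proof is absent, and what you propose in its place is either the very step that made the earlier result conditional or a method not known to succeed. For part (1), the paper does not search for an elliptic quotient of $\Xpre(2,4,0)$ over a quadratic field. Its mechanism is a covering collection built on the degree-$2$ map $\pi\colon \Xpre(2,4,0)\to\Xpre(2,3,0)\cong E$ with $E\colon v^2=u^3-u+1$: since $E(\Q)\cong\Z$ is generated by $Q_0=(1,-1)$, one pulls $\pi$ back along $[2]$ and $T_{3Q_0}\circ[2]$, and then further along the degree-$2$ cover $F\colon t^4=u^3-u+1\to E$; after a partial normalization this yields a finite \'etale double cover whose twists over $R=\Z[1/2,1/23]$ are classified by $R^\times/(R^\times)^2$, so every rational point of $\Xpre(2,4,0)$ lands on one of the eight genus-$3$ curves $C_D\colon D^2y^4=x^3-x+1$. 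The elliptic Chabauty step is then run over the \emph{cubic} field $K=\Q[\theta]/(\theta^3-\theta+1)$, after a unit-and-ideal argument restricts the descent classes $(D_A,D_B)$ to four possibilities. Your plan---parametrize the conic $w^2+w=A^2$ and hope for an elliptic quotient over a quadratic field with rank $<2$---omits all of this machinery; you yourself flag the rank bound as ``the crux,'' and producing such a quotient directly on the genus-$5$ curve is precisely what Faber--Hutz--Stoll could not do without assuming BSD. Identifying the obstacle is not the same as overcoming it.

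For part (2), your coprime factorization $a^d=b\,(b^{d-1}+m^{d-1})$ is sound and parallels the paper's, but the endgame is not. The paper observes that the resulting integral equation $A^d=C^{d-1}+B^{d(d-1)}$ is \emph{literally} an instance of $x^n+y^n=z^2$: for even $d$ take $n=d-1$ and $z=A^{d/2}$, for odd $d$ take $n=d$ and $z=C^{(d-1)/2}$, and then Darmon--Merel (valid for $n\ge4$) finishes; this is also why $d=3$ and $d=4$ require separate rank-$0$ Mordell--Weil computations. Your substitute---factoring $v^d-w^d=(v-w)\Phi(v,w)$ and imposing congruence conditions---is not known to resolve this family of generalized Fermat equations for any infinite set of exponents, let alone uniformly in $d$, and you concede as much in your closing paragraph. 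Without the modularity input of Darmon--Merel (or an equivalent), part (2) does not close.
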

As a corollary, we get the following result.
\begin{cor}\label{cor: number of rational preimages}
    For even $d\geq 3$, we have the equality
    \[
        \#\bigcup_{N\geq 1}f_{d,c}^{-N}(0)(\Q)=
        \begin{cases}
            3 & \text{for } c = -1,\\
            1 & \text{for } c = 0,\\
            2 & \text{for } c = -(d\text{-th power of rationals other than } 0, \pm 1), \text{ and}\\
            0 & \text{for other rationals.}
        \end{cases}
    \]
    For odd $d \geq 3$, we have the equality
    \[
        \#\bigcup_{N\geq 1}f_{d,c}^{-N}(0)(\Q)=
        \begin{cases}
            1 & \text{for } c = -(d\text{-th power of rationals}), \text{ and}\\
            0 & \text{for other rationals}.
        \end{cases}
    \]
    Hence, we have
    \[
        \kappa(d,0,\Q) =
        \begin{cases}
            6 &\text{ for } d = 2,\\
            3 &\text{ for even } d \geq 3, \text{ and}\\
            1 &\text{ for odd } d \geq 3.
        \end{cases}
    \]
\end{cor}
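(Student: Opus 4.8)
The plan is to derive \cref{cor: number of rational preimages} from \cref{thm: main} together with an elementary analysis of the backward orbit of $0$ under $f_{d,c}$. Writing $P_N \coloneqq f_{d,c}^{-N}(0)(\Q)$, the first thing I would record is a chain property: if $x \in P_N$, then every forward iterate $f_{d,c}^{\circ k}(x)$ with $0 \le k \le N$ is again rational and lies in $P_{N-k}$, since $f_{d,c}$ has rational coefficients. Two consequences follow at once. If $P_1 = \emptyset$, then $\bigcup_{N \ge 1} P_N = \emptyset$, because any $x \in P_N$ would force $f_{d,c}^{\circ(N-1)}(x) \in P_1$. If $P_2 = \emptyset$, then $\bigcup_{N \ge 1} P_N = P_1$, because any $x \in P_N$ with $N \ge 2$ would force $f_{d,c}^{\circ(N-2)}(x) \in P_2$. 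Thus the entire backward orbit is controlled by $P_1$ and $P_2$ alone.

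Next I would compute $P_1$ by solving $x^d = -c$, where the parity of $d$ enters. For even $d$ this has the two solutions $\pm t$ when $-c = t^d$ with $t \ne 0$, the single solution $0$ when $c = 0$, and no rational solution otherwise; for odd $d$ it has the unique solution $t$ when $-c = t^d$ and no solution otherwise. This pins down $\#P_1 \in \{0,1,2\}$ in every case of the corollary.

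The engine is then \cref{item: thm: d geq 3}: for $d \ge 3$ and $c$ outside the stated trivial set, $f_{d,c}$ admits no rational second preimages, i.e.\ $P_2 = \emptyset$. Combined with the chain property this gives $\bigcup_{N \ge 1} P_N = P_1$, so the count is exactly $\#P_1$ from the previous step: for even $d$ it is $2$ when $-c$ is a $d$-th power of a rational other than $0,\pm1$ and $0$ otherwise, and for odd $d$ it is $1$ when $-c$ is a $d$-th power and $0$ otherwise. The only values needing separate treatment are the excluded ones, handled by direct computation: for $c = 0$ the map is $x \mapsto x^d$, whose backward orbit of $0$ is $\{0\}$, giving count $1$; for $c = -1$ with even $d$ the critical value $0$ lies on the two-cycle $0 \leftrightarrow -1$, and unwinding the preimage tree yields exactly $\{0,1,-1\}$, giving count $3$. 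Taking the supremum over $c$ then produces $\kappa(d,0,\Q) = 3$ for even $d \ge 3$ and $1$ for odd $d \ge 3$, while for $d = 2$ the value $6$ follows by feeding \cref{item: thm: d = 2} into the hypothesis of \cref{thm: FHS}.

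The routine but error-prone part, and hence the main obstacle at this stage, is the bookkeeping around whether $0$ re-enters $P_2$: one checks that $0 \in P_2$ iff $f_{d,c}^{\circ 2}(0)=0$, i.e.\ $c(c^{d-1}+1)=0$, which holds precisely when $c \in \{0,-1\}$ for even $d$ (since $c^{d-1}=-1$ forces $c=-1$) and only when $c=0$ for odd $d$ (since $c^{d-1}=-1$ is then unsolvable over $\Q$). This is exactly why the two-cycle at $c=-1$ survives for even $d$ and must be computed by hand, and why that value is excluded from \cref{thm: main}. All the genuine Diophantine difficulty is already absorbed into \cref{thm: main}, so the corollary reduces to this finite, parity-sensitive case analysis once the chain property is in place.
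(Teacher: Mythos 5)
Your proposal is correct and follows essentially the same route as the paper: compute the first preimage set $f_{d,c}^{-1}(0)(\Q)$ by solving $x^d=-c$ case by case, invoke Theorem~\ref{thm: main}\ref{item: thm: d geq 3} to kill all second (hence deeper) preimages outside the excluded values of $c$, and handle $c=0$ and (for even $d$) $c=-1$ by directly unwinding the finite preimage tree. The only difference is presentational — you make the chain property and the criterion $c(c^{d-1}+1)=0$ explicit, and you spell out that the $d=2$ value of $\kappa$ comes from feeding Theorem~\ref{thm: main}\ref{item: thm: d = 2} into Theorem~\ref{thm: FHS}, which the paper leaves implicit.
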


We will prove \cref{thm: main} by the following strategy.
For $d=2$, we exploit the geometry of the degree $2$ cover
\[
    \pi\colon \Xpre(2,4,0)\longrightarrow \Xpre(2,3,0)\cong E
\]
with
\[
    E: v^2=u^3-u+1,
\]
analyze ramification above $E[2]$,
and pass to suitable base changes to obtain finite \'etale double covers of $\Xpre(2,4,0)$.
This reduces the problem to lower-genus curves $C_D$ classified by a class $[D]\in R^\times/(R^\times)^2$ for $R= \Z[1/2,1/23]$.
We determine $C_D(\mathbb Q)$ via the elliptic Chabauty method due to \cite{Bru03}.
\begin{thm}\label{thm: rational points of CD}
    For $D = (-1)^{\varepsilon_0} 2^{\varepsilon_1} 23^{\varepsilon_2}$ with $\varepsilon_i \in \{0, 1\}\ (0 \leq i \leq 2)$,
    let $C_D$ be the affine plane curve defined by
    \[
        D^2y^4 = x^3 - x + 1.
    \]
    Then, $C_D(\Q)$ is non-empty only if $D = \pm 1$.
    Moreover, we have
    \[
        C_1(\Q) = C_{-1}(\Q) = \{(0, \pm 1), (\pm1, \pm 1)\}.
    \]
\end{thm}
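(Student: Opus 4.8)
The plan is to realize each $C_D$ as a double cover of the elliptic curve $E$ and to resolve the resulting square condition by Bruin's elliptic Chabauty method over the cubic field cut out by the right-hand side. First I would record the geometric reduction. The substitution $v = Dy^2$ gives a degree-$2$ morphism $\phi_D\colon C_D \to E$, $(x,y)\mapsto (x, Dy^2)$, defined over $\mathbb{Q}$; since the defining equation depends only on $D^2$ we have $C_D = C_{-D}$, so it suffices to treat $D \in \{1,2,23,46\}$. A point $(x,y)\in C_D(\mathbb{Q})$ is exactly a point $P=(u,v)\in E(\mathbb{Q})$ whose ordinate satisfies $v/D \in (\mathbb{Q}^\times)^2 \cup\{0\}$, equivalently $D\,v(P)$ is a rational square. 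Because $E$ has positive rank, one cannot simply enumerate $E(\mathbb{Q})$; instead the square condition must be converted into a finite collection of auxiliary genus-$1$ curves over a number field.

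Second, I would perform the descent over $K = \mathbb{Q}(\theta)$, where $\theta^3-\theta+1=0$. This is the cubic field of discriminant $-23$; it has class number $1$ and unit rank $1$, and one computes $N_{K/\mathbb{Q}}(3\theta^2-1)=23$, which is the source of the prime $23$. Factoring $x^3-x+1 = (x-\theta)\,q(x)$ with $q(x)=x^2+\theta x+(\theta^2-1)$ and reading the equation $D^2y^4 = (x-\theta)q(x)$ prime-by-prime in $\mathcal{O}_K$, a two-descent shows that any rational point forces $x-\theta = \delta\,\xi^2$ for some $\xi\in K^\times$ and some $\delta$ in an explicit finite subset $\Sigma_D \subset K^\times/(K^\times)^2$. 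The class number being $1$ confines the support of $\delta$ to the primes above $\{2,23\}$; the norm condition $N_{K/\mathbb{Q}}(\delta)\in D^2(\mathbb{Q}^\times)^2$ together with the archimedean sign conditions pins down the coset, and this is precisely what the classification by $[D]\in R^\times/(R^\times)^2$ for $R=\mathbb{Z}[1/2,1/23]$ encodes.

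Third, for each surviving pair $(D,\delta)$ I would rewrite $x-\theta = \delta\xi^2$ with $x\in\mathbb{Q}$ and $\xi = \xi_0+\xi_1\theta+\xi_2\theta^2$: equating the $\theta$- and $\theta^2$-coefficients yields two quadrics in $(\xi_0,\xi_1,\xi_2)$, so the locus is a genus-$1$ curve $E_\delta$ over $K$ carrying the degree-$2$ map $x\colon E_\delta\to\mathbb{P}^1$ recovered from the constant coefficient, and every point of $C_D(\mathbb{Q})$ produces a point of $E_\delta(K)$ with $x\in\mathbb{P}^1(\mathbb{Q})$. I would then compute the Mordell--Weil group $E_\delta(K)$, check that $\operatorname{rank}E_\delta(K) < [K:\mathbb{Q}]=3$, and run the elliptic Chabauty method of \cite{Bru03} at a suitable auxiliary prime to determine the finitely many $P\in E_\delta(K)$ with rational $x$-coordinate. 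Pulling these back, the only admissible values are $x\in\{0,1,-1\}$, occurring for $D=\pm1$ with $v=1$ and hence $y=\pm1$, which gives the six listed points, whereas every $\delta$ attached to $D\in\{2,23,46\}$ yields no point with rational $x$; this is the desired conclusion.

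The main obstacle I anticipate is the Mordell--Weil step: one must compute ranks (and generators) of the twisted genus-$1$ curves $E_\delta$ over the cubic field $K$ for every relevant $\delta$ and verify the strict inequality $\operatorname{rank}E_\delta(K)\leq 2$ that elliptic Chabauty requires. If some twist turned out to have rank $\geq 3$, the direct method would break down and would have to be supplemented by a covering collection or a Mordell--Weil sieve; the careful bookkeeping of the local conditions that trims $\Sigma_D$ down to a genuinely small set is what makes the rank bound tractable here.
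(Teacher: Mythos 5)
Your overall strategy --- factor $x^3-x+1$ over the cubic field $K=\Q(\theta)$ of discriminant $-23$, cut down the possible twisting classes using the class number, the unit group and the primes above $2$ and $23$, and finish with elliptic Chabauty over $K$ in the style of Bruin --- is the same as the paper's, and your description of the arithmetic of $K$ is correct. But the descent step in your third paragraph has a genuine gap: the condition $x-\theta=\delta\xi^{2}$ is only the $2$-descent condition on $E$, and it encodes merely that $x^{3}-x+1$ is $D^{2}$ times a rational \emph{square}, i.e.\ that $(x,Dy^{2})$ lies on $E(\Q)$; it does not see the extra requirement that $y^{2}$ is itself a square. Concretely, your curves $E_\delta$ are twists of $2$-coverings of $E$ given by equations with rational coefficients that do not involve $D$ at all (the norm condition $\Norm_{K/\Q}(\delta)\in D^{2}(\Q^{\times})^{2}=(\Q^{\times})^{2}$ is vacuous because $D^{2}$ is already a square), every point of $E_\delta(\Q)$ automatically has rational $x$-coordinate, and since $E$ has rank $1$ both classes $\delta=1$ and $\delta=-\theta$ give coverings with infinitely many rational points. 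The set you propose to compute by elliptic Chabauty is therefore infinite, and the rank inequality you would need can never do the work of separating $C_D(\Q)$ from $E(\Q)$.

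The missing idea is to exploit the full fourth-power structure of $D^{2}y^{4}$. After clearing denominators the paper writes $Y^{4}=X^{3}-D^{4}XZ^{8}+D^{6}Z^{12}$ and imposes $X-D^{2}\theta Z^{4}=D_{A}s_{A}^{4}$ and $X^{2}+D^{2}\theta XZ^{4}+D^{4}(\theta^{2}-1)Z^{8}=D_{B}s_{B}^{4}$ with $D_{A},D_{B}$ fourth-power free and $D_{A}D_{B}\in(K^{\times})^{4}$; a local analysis at $2$ and at the two primes above $23$, combined with the evenness of $v_{\pe}(D_{A})$ coming from the image of the $x-T$ map, pins $(D_{A},D_{B})$ down to four pairs of units. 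The curve on which elliptic Chabauty is actually run is then obtained by completing the square in the \emph{quadratic} factor: $w^{2}=D_{B}t^{4}-(3\theta^{2}-4)/4$, a genus-one quartic genuinely defined over $K$ (not a base change from $\Q$), whose map $w\mapsto w-\theta/2$ recovers the rational coordinate $x/D^{2}$. That is the object for which the rank bound over $K$ and the Chabauty computation apply and return the finite list forcing $x\in\{0,\pm1\}$, hence $x^{3}-x+1=1$ and $D=\pm1$. If you wanted to stay with the linear factor you would have to replace $x-\theta=\delta\xi^{2}$ by a fourth-power condition, which no longer yields a genus-one curve; passing through the quadratic factor is precisely what keeps the problem elliptic.
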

Then, we obtain the assertion in the quadratic case.

For the case $d\geq 3$, the conclusion follows even from a Diophantine reduction to the results of \cite{DM97} on the consequences of modularity.

\begin{rem}
    There is an arboreal perspective for \cref{thm: main} and the previous works \cite{FHIJMTZ09, FHS11}, and \cite{HHK11}.
    Fix $a$. The iterated preimages of $a$ by $f_{d,c}$ form a rooted $d$-ary tree $T_d$, and the absolute Galois group acts via the arboreal representation
    \[
        \rho_{f,a}: \Gal(\overline L/L)\longrightarrow\mathrm{Aut}(T_d).
    \]
    The uniformity results on the rational iterated preimages are equivalent to the fact that rational vertices cannot persist beyond a uniform level.
    This picture is consistent with the expectation that, for generic $c$, the image of $\rho_{f, a}$ is large and rational constraints become increasingly scarce deeper in the tree.
    See \cite{Odo85} and \cite{Jon13} for these expectations.
\end{rem}

\begin{rem}
    We may be able to generalize the method used in the proof of \cref{thm: rational points of CD} to the curves defined by
    \begin{equation} \label{eq:y4=fx}
        D^2 y^4 = f(x)
    \end{equation}
    for $f \in \Q[x]$ having no rational roots and $\deg f = 3$.
    Moreover, we can use the method to determine the perfect squares in elliptic divisibility sequences (see \cite{Ray12, NMSS24} for the study of perfect powers in elliptic divisibility sequences in general).
    Although generalized $n$-descent for curves of genus $3$ has already been developed in \cite{BPS16}, it needs heavy computational costs, and often needs to assume the BSD conjecture or the generalized Riemann Hypothesis.
    This is the reason that \cite{FHS11} needed the BSD conjecture.
    Our method seems to be comparatively computationally light.

    Although the explicit descent on the Jacobians of the curve
    \[
        y^p = f(x)
    \]
    for a prime number $p$ is dealt with in 
    \cite{Sch95, PS97, Sto01, BS09, Mou10, Cre12}, our curve \cref{eq:y4=fx} is out of their scope.
\end{rem}


\subsection*{Organization of this paper}


In \cref{subsec: geometry}, we recall some geometric facts on $\Xpre(2,N,0)$ developped in \cite{FHIJMTZ09} and \cite{FHS11}.
In \cref{subsec: arithmetic}, we list some arithmetic facts on a specific elliptic curve.
In \cref{sec: d = 2} and \cref{sec: d geq 3} involves the proof of \cref{thm: main} \cref{item: thm: d = 2} and \cref{item: thm: d geq 3}, respectively.
In \cref{subsec: integral models and ramification}, we define integral models of curves under consideration, and we investigate their singularities and \'etaleness of some morphisms.
In \cref{subsec: reduction to CD} we reduce the proof of \cref{thm: main} \cref{item: thm: d = 2} to the determination of rational points of $C_D$'s.
In \cref{subsec: rational points of CD}, we completely determine the rational points of $C_D$'s.
In \cref{subsec: proof of main theorem for d=2}, we complete the proof of \cref{thm: main}\cref{item: thm: d = 2}.


\section*{Acknowledgement}

The author would like to thank Professor Bjorn Poonen for providing a geometric perspective that clarified both the elementary and the more intricate calculations in this work.
The author is also grateful to Professor Shun’ichi Yokoyama for his helpful advice on the use of \textsc{Magma}, and to Professor Yasuhiro Ishitsuka and Professor Hiroyasu Miyazaki for their valuable comments.


\section{Preliminaries} \label{sec: preliminaries}



\subsection{Geometry} \label{subsec: geometry}


In this subsection, we recall some geometric facts on the preimage curves for $d = 2$ given in \cite{FHIJMTZ09} and \cite{FHS11}.
Let $k$ be a field of characteristic different from $2$.
For $a \in k$, define an algebraic set
\[
    \Ypre(2, N, a) \coloneqq V(f_{2,c}^{\circ N}(x) - a) \subset \bbA_k^2 =\Spec k[c,x],
\]
and a morphism
$\iota \colon \Ypre(2, N, a) \longrightarrow \bbA^N$ by
\[
    \iota(c,x) = (f_{2,c}^{\circ(N-1)}(x), f_{2,c}^{\circ(N-2)}(x), \ldots , f_{2,c}(x), x).
\]
Note that the order of the polynomials defining $\iota$ is the reverse of that in \cite{FHS11}.
If $\Ypre(2, N, a)$ is geometrically irreducible, let $\Xpre(2, N, a)$ be the unique complete nonsingular curve birational to $\Ypre(2, N, a)$.
We have the following lemma.

\begin{lem}[{\cite[Lemma $4.1$, Theorem $4.2$, and Corollary $4.3$]{FHS11}}]
    \noindent
    \begin{parts}
        \item \label{item: lem: defining ideal}
            The morphism $\iota \colon \Ypre(2, N, a) \longrightarrow \bbA^N$ is a closed immersion.
            If $\bbA^N$ has coordinates $z_1, z_2, \ldots, z_N$, then the ideal defining the image of $\iota$ is
            \[
                I = (z_1^2 + z_i - z_{i+1}^2 - a : 1 \leq i \leq N-1).
            \]
            
        \item \label{item: lem: defining homogeneous ideal}
            Let $W,Z_1,Z_2,\ldots,Z_N$ be the homogeneous coordinate of $\bbP^N$, and identify $\bbA^N$ with the subset of $\bbP^N$ where $W\neq 0$. Then, the projective closure of the image of $\iota$ is a complete intersection of the closed set defined by the homogeneous ideal
            \[
                J = (Z_1^2 + Z_iW - Z_{i+1}^2 - aW^2 : 1 \leq i \leq N-1).
            \]
            
        \item \label{item: lem: boundaries}
            The points of $V(J)$ on the hyperplane $W = 0$ have homogeneous coordinates
            \[
                [0, \epsilon_1, \epsilon_2, \ldots, \epsilon_N], \quad \epsilon_i = \pm 1.
            \]
            Moreover, they are all nonsingular points of $V(J)$.
            
        \item \label{item: lem: normal projective model}
            If $\Ypre(2, N, a)$ is nonsingular, then we have $\Xpre(2, N, a) \cong V(J)$ and the complement of the affine part $\Xpre(2, N, a) \setminus \Ypre(2, N, a)$ consists $2^{N-1}$ points.
            
        \item \label{item: lem: geometric irreducibility}
            If $\Ypre(2, N, a)$ is nonsingular, then the curve $\Xpre(2, N, a)$ can be defined over $k$, and it is both nonsingular and geometrically irreducible.
    \end{parts}
\end{lem}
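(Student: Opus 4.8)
The backbone of the whole statement is the explicit affine ideal in \cref{item: lem: defining ideal}; once the coordinate ring of $\Ypre(2,N,a)$ is identified with $k[z_1,\dots,z_N]/I$, the remaining parts fall out from elementary projective geometry together with standard facts about complete intersections. I would prove \cref{item: lem: defining ideal} by showing the comorphism $\phi\colon k[z_1,\dots,z_N]\to \mathcal{O}(\Ypre(2,N,a))=k[c,x]/(f_{2,c}^{\circ N}(x)-a)$, $z_i\mapsto f_{2,c}^{\circ(N-i)}(x)$, is surjective with kernel $I$. Surjectivity is immediate, since $z_N\mapsto x$ and $z_{N-1}-z_N^2\mapsto c$. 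For the kernel, note that on $\Ypre(2,N,a)$ one has $z_{i+1}^2=z_i-c$ and $z_1^2=a-c$, so $z_1^2+z_i-z_{i+1}^2-a=0$ and $I$ lies in the kernel. To see that $I$ is the whole kernel I would construct the inverse on $k[z]/I$ by $x\mapsto z_N$, $c\mapsto z_{N-1}-z_N^2$, and check by descending induction, using the relation $z_i\equiv z_{i+1}^2+(a-z_1^2)\pmod I$, that $f_{2,c}^{\circ j}(x)\mapsto z_{N-j}$ and hence $f_{2,c}^{\circ N}(x)-a\mapsto 0$; the two maps are mutually inverse on generators, so $\phi$ induces an isomorphism $k[z]/I\cong\mathcal{O}(\Ypre(2,N,a))$, which is exactly the assertion that $\iota$ is a closed immersion with image ideal $I$.

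Next I would treat \cref{item: lem: boundaries}, since the dimension count in \cref{item: lem: defining homogeneous ideal} depends on it. Setting $W=0$ in the generators $g_i=Z_1^2+Z_iW-Z_{i+1}^2-aW^2$ of $J$ gives $Z_{i+1}^2=Z_1^2$ for $1\le i\le N-1$; thus $Z_j=\pm Z_1$ with $Z_1\neq0$, and after scaling these are precisely the $2^{N-1}$ projective points $[0,\epsilon_1,\dots,\epsilon_N]$ (the tuple and its negative giving the same point). For nonsingularity I would compute the $(N-1)\times(N+1)$ Jacobian $(\partial g_i/\partial x_\ell)$ in the coordinates $W,Z_1,\dots,Z_N$ at such a point: in the columns indexed by $Z_2,\dots,Z_N$ the only nonzero contributions come from the terms $-Z_{i+1}^2$ (the $Z_iW$ terms vanish at $W=0$), producing, after reindexing, the diagonal matrix $-2\,\mathrm{diag}(\epsilon_2,\dots,\epsilon_N)$, which is invertible because $\operatorname{char}k\neq2$. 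Hence the Jacobian has maximal rank $N-1$, equal to the codimension, so every such point is a smooth point of $V(J)$ lying on a $1$-dimensional component.

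With these in hand, \cref{item: lem: defining homogeneous ideal} becomes a dimension count plus unmixedness. Dehomogenizing at $W=1$ identifies $V(J)\cap\{W\neq0\}$ with $V(I)\cong\Ypre(2,N,a)$, a hypersurface in $\bbA^2$ of dimension $1$, while $V(J)\cap\{W=0\}$ is the finite set just described. By the projective dimension theorem every component of $V(J)$ has dimension at least $N-(N-1)=1$, so no component can lie in the hyperplane $\{W=0\}$; intersecting with the affine chart then bounds each component's dimension above by $\dim V(I)=1$. Thus $V(J)$ has pure dimension $1$, i.e. codimension $N-1$ equal to the number of defining equations, so it is a complete intersection; and since every component meets the affine chart, $V(J)=\overline{V(I)}$ is the projective closure of the image of $\iota$.

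Finally, under the nonsingularity hypothesis I would deduce \cref{item: lem: normal projective model} and \cref{item: lem: geometric irreducibility} at once. The affine part $V(I)\cong\Ypre(2,N,a)$ is smooth by assumption and the points at infinity are smooth by \cref{item: lem: boundaries}, so $V(J)$ is a smooth projective curve, defined over $k$ because its equations have coefficients in $k$ (as $a\in k$). The key extra input is that a positive-dimensional complete intersection in $\bbP^N$ is geometrically connected; combined with smoothness this yields geometric irreducibility, which simultaneously shows that $V(J)$ is the unique smooth complete model of the function field of $\Ypre(2,N,a)$, so $\Xpre(2,N,a)\cong V(J)$ with $\Xpre(2,N,a)\setminus\Ypre(2,N,a)=V(J)\cap\{W=0\}$ the $2^{N-1}$ boundary points. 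I expect \cref{item: lem: geometric irreducibility} to be the main obstacle: proving geometric irreducibility directly is awkward, and the cleanest route runs through the connectedness theorem for complete intersections, which is exactly why establishing the complete-intersection property in \cref{item: lem: defining homogeneous ideal} is the technical heart of the argument.
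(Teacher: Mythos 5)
Your argument is essentially correct and complete, but note that the paper itself does not prove this lemma: its ``proof'' consists of citations --- \cite[Section 4]{FHS11} for \cref{item: lem: defining ideal}--\cref{item: lem: normal projective model} and \cite[Proposition 4.8]{FHIJMTZ09} for \cref{item: lem: geometric irreducibility} --- together with a remark that the coordinates here are reversed relative to \cite{FHS11} (arranged so that $f_{2,c}^{\circ i}(z_i)=a$ rather than $f_{2,c}^{\circ i}(z_0)=z_i$). Your blind reconstruction is therefore by necessity a different route from the reviewed paper, and it checks out at every step: the comorphism computation identifying $\mathcal{O}(\Ypre(2,N,a))$ with $k[z_1,\dots,z_N]/I$ (including the descending induction showing $f_{2,c}^{\circ j}(x)\mapsto z_{N-j}$, which is what makes the inverse map well defined, and which correctly implements the paper's reversed coordinate convention); the Jacobian computation at $W=0$, where the columns for $Z_2,\dots,Z_N$ give $-2\,\mathrm{diag}(\epsilon_2,\dots,\epsilon_N)$ and $\operatorname{char}k\neq 2$ enters exactly where the standing hypothesis permits, with Krull's height theorem rightly invoked so that rank $N-1$ at a point forces a smooth point on a one-dimensional component; and the dimension count showing $V(J)$ is pure of dimension $1$, hence a complete intersection all of whose components meet the chart $W\neq 0$, so that $V(J)=\overline{V(I)}$. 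The place where you genuinely diverge from the cited sources is \cref{item: lem: geometric irreducibility}: you deduce geometric irreducibility from smoothness plus the connectedness of positive-dimensional complete intersections in $\bbP^N$ (Hartshorne, Chapter III, Exercise 5.5), where the paper simply defers to \cite[Proposition 4.8]{FHIJMTZ09}; this is a clean, conceptual alternative that correctly exploits the complete-intersection property established in \cref{item: lem: defining homogeneous ideal}, provided one applies connectedness after base change to $\overline{k}$, where the same equations still cut out a one-dimensional complete intersection, so that smooth plus connected gives irreducible. Two minor caveats worth recording: surjectivity of the comorphism as you state it uses $N\geq 2$ (the case $N=1$ is degenerate and irrelevant to the paper's use with $N=3,4$), and over imperfect fields ``nonsingular'' for $\Ypre(2,N,a)$ should be read as smooth for your deduction that $V(J)$ is smooth --- a subtlety equally present in the lemma as stated and harmless in the paper's applications over $\Q$ and over $\overline{\bbF_p}$.
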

\begin{proof}
    See \cite[Section $4$]{FHS11} for \cref{item: lem: defining ideal}--\cref{item: lem: normal projective model} and \cite[Proposition $4.8$]{FHIJMTZ09} for  \cref{item: lem: geometric irreducibility}.
    Note that the coordinates in \cite{FHS11} are chosen to satisfy $f_{2,c}^{\circ i}(z_0)=z_i$, and the coordinates in this paper are arranged to satisfy $f_{2,c}^{\circ i}(z_i) = a$.
\end{proof}

\begin{lem}\label{lem: geometry 2}
    \begin{parts}
        \item
            The complete curve $\Xpre(2,3,0)$ is isomorphic to the elliptic curve
            \[
                E: v^2 = u^3 - u + 1
            \]
            over $\Q$. The birational transform $E \dashrightarrow \Ypre(2, 3, 0)$ is given by
            \[
                x = \frac{v}{u^2 - 1}, \quad c = -\frac{1}{(u^2 - 1)^2}.
            \]
        \item\label{item: lem: isomorphisms}
            If $[U, V, S]$ is the homogeneous coordinate of $E$, the isomorphism $E\overset{\sim}{\longrightarrow}\Xpre(2,3,0)$ is defined by
            \[
                [U, V, S] \mapsto
                \begin{cases}
                    [U^2 - S^2, S^2, US, VS] & \text{if } S\neq 0,\\
                    [V^2 - S^2, US, U^2, UV] & \text{if } V^2 - S^2 \neq 0,
                \end{cases}
            \]
            and the inverse is given by
            \[
                [W, Z_1, Z_2, Z_3] \mapsto
                \begin{cases}
                    [Z_2, Z_3, Z_1] & \text{if }Z_i \neq 0 \text{ for some }i,\\
                    [(W + Z_1)Z_3, Z_1Z_2 + Z_1 W + W^2, Z_2Z_3] & \text{if } Z_1Z_2 + Z_1 W + W^2 \neq 0.
                \end{cases}
            \]
            Note that these expressions coincide with each other at the intersection of the loci.
        \end{parts}
\end{lem}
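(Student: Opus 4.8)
The plan is to make the closed immersion $\iota$ explicit and then verify the stated formulas as a pair of mutually inverse morphisms by direct substitution. Specializing \cref{item: lem: defining ideal} to $N=3$ and $a=0$, write $(z_1,z_2,z_3)=\iota(c,x)=(f_{2,c}^{\circ 2}(x),f_{2,c}(x),x)$; the equation $f_{2,c}^{\circ 3}(x)=0$ forces $c=-z_1^2$, and $\Ypre(2,3,0)$ is carried isomorphically onto the affine curve cut out by
\[
    z_2^2=z_1^2+z_1,\qquad z_3^2=z_1^2+z_2 .
\]
The Jacobian criterion shows at once that this curve is nonsingular, so by \cref{item: lem: normal projective model} and \cref{item: lem: geometric irreducibility} the complete nonsingular model is $\Xpre(2,3,0)=V(J)$ with $J=(Z_1^2+Z_1W-Z_2^2,\ Z_1^2+Z_2W-Z_3^2)$.

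To prove \cref{item: lem: isomorphisms} I would check the two displayed maps $\phi\colon E\to V(J)$ and $\psi\colon V(J)\to E$ directly. Substituting each chart of $\phi$ into the generators of $J$, the identity $Z_1^2+Z_1W=Z_2^2$ holds formally, while $Z_1^2+Z_2W=Z_3^2$ collapses to the Weierstrass relation $V^2S=U^3-US^2+S^3$; hence the image lies on $V(J)$. Symmetrically, feeding each chart of $\psi$ into $V^2S=U^3-US^2+S^3$ reproduces that relation after using the two defining equations of $V(J)$. I then resolve base points: the first chart of $\phi$ degenerates to $[0,0,0,0]$ only at $O=[0,1,0]$, where the second chart gives $[1,0,0,0]$, while the second chart degenerates only at $[0,\pm1,1]$, which the first chart covers; the analysis of $\psi$ is parallel, its first chart having the unique base point $[1,0,0,0]$, resolved by the second chart to $O$. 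On the overlaps the two expressions for $\phi$ differ by the scalar $U/S$, and those for $\psi$ agree after invoking the relations of $V(J)$, so each pair of charts glues to a morphism. Composing the first charts gives $[U,V,S]\mapsto[US,VS,S^2]=[U,V,S]$ and, using $Z_2^2-Z_1^2=Z_1W$, $[W,Z_1,Z_2,Z_3]\mapsto Z_1\cdot[W,Z_1,Z_2,Z_3]$; as these agree with the identity on a dense open subset of separated curves, $\phi$ and $\psi$ are mutually inverse, proving $E\cong\Xpre(2,3,0)$.

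Part (1) then follows by transporting this isomorphism to the $(c,x)$-plane: on the patch $W\ne0$ one has $x=z_3=Z_3/W$ and $c=-z_1^2=-(Z_1/W)^2$, and substituting the first chart of $\phi$ at $S=1$ yields exactly $x=v/(u^2-1)$ and $c=-1/(u^2-1)^2$. I expect the only real difficulty to be the extension of the rational maps to honest morphisms: one must pin down the base locus of each chart, check that the complementary chart is regular there, and confirm that the two local formulas agree on their overlap—each step forcing a deliberate use of either the Weierstrass equation of $E$ or the pair of relations defining $V(J)$, rather than any isolated hard computation.
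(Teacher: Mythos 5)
Your proposal is correct and follows essentially the same route as the paper: both rest on direct substitution into the defining relations $Z_1^2+Z_iW-Z_{i+1}^2=0$ and the Weierstrass equation to verify that the displayed charts land on the right varieties and agree on their overlaps. The only difference is one of completeness --- the paper cites \cite[Lemma 5.2]{FHS11} for the birational transformation and records only the coincidence of the two chart expressions, whereas you additionally carry out the base-point resolution, the composition check, and the deduction of part (1) from part (2); these computations all check out.
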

\begin{proof}
    See \cite[Lemma 5.2]{FHS11} for the statement on birational transformation.
    We can check the coincidence of the two expressions as follows
    \begin{align*}
        [U^2 - S^2, S^2, US, VS]
        &= [U^3 - US^2, US^2, U^2S, UVS]\\
        &= [V^2S - S^3, US^2, U^2S, UVS]\\
        &= [V^2 - S^2, US, U^2, UV], \text{ and}\\
        [Z_2, Z_3, Z_1]
        &= [Z_2^2Z_3, Z_2Z_3^2, Z_1Z_2Z_3]\\
        &= [(W + Z_1)Z_1Z_3, Z_2(Z_1^2 + WZ_2), Z_1Z_2Z_3]\\
        &= [(W + Z_1)Z_1Z_3, Z_1^2Z_2 + W(Z_1^2+Z_1W), Z_1Z_2Z_3]\\
        &= [(W + Z_1)Z_3, Z_1Z_2 + Z_1W + W^2, Z_2Z_3].
    \end{align*}
\end{proof}


\subsection{Arithmetic} \label{subsec: arithmetic}

This subsection lists some arithmetic facts on curves, which we will use in the proof of \cref{thm: main} \cref{item: thm: d = 2}.
Almost all facts in this subsection are checked using \textsc{Magma} \cite{magma}.

\begin{lem}\label{lem: number field}
    Let $K = \Q(\theta)$ be the number field generated by an element $\theta$ with the minimal polynomial 
    \[
        f(t) = t^3 - t + 1.
    \]
    Let $\calO_K$ be the ring of integers of $K$.
    Then, the following statements hold.
    \begin{parts}
        \item The discriminant of $K$ is $-23$.
        \item \label{item: lem: ramification of number field}
            The only prime ideal $23$ ramifies in $K$, and we can write $(23) = \pe_1 \pe_2^2$ with distinct non-zero prime ideals $\pe_1, \pe_2$ of $K$. Moreover, $\pe_1$ and $\pe_2$ are generated by $\beta_1 \coloneqq 3\theta^2 - 4$ and $\beta_2 \coloneqq 3\theta^2 -1$, respectively.
        \item \label{item: lem: class group}
            The class number of $K$ is $1$.
        \item \label{item: lem: unit group}
            The unit group of $\calO_K$ is generated by $\pm 1$ and $\theta$, and is isomorphic to $\Z/2\Z \times \Z$.
    \end{parts}
\end{lem}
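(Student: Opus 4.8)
The plan is to treat the four assertions by the standard toolkit for a cubic field of small discriminant, reducing each to an explicit finite verification.

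For (a) and (c) I would proceed structurally. Since $f(t) = t^3 - t + 1$ has the form $t^3 + pt + q$, its polynomial discriminant is $-4p^3 - 27q^2 = -4(-1)^3 - 27 = -23$. From the relation $\operatorname{disc}(f) = [\calO_K : \Z[\theta]]^2\, d_K$ and the fact that $-23$ is squarefree, the index must be $1$; hence $\Z[\theta] = \calO_K$, the set $\{1,\theta,\theta^2\}$ is an integral basis, and $d_K = -23$. In particular $K$ has signature $(r_1,r_2) = (1,1)$ because $d_K < 0$. Then the Minkowski bound
\[
    M_K = \frac{n!}{n^n}\Bigl(\frac{4}{\pi}\Bigr)^{r_2}\sqrt{|d_K|} = \frac{6}{27}\cdot\frac{4}{\pi}\cdot\sqrt{23} \approx 1.36 < 2
\]
forces every ideal class to contain an integral ideal of norm $1$, so the class group is trivial and the class number is $1$.

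For (b), since $23 \nmid [\calO_K:\Z[\theta]] = 1$, the Dedekind--Kummer theorem reduces the factorization of $(23)$ to that of $f \bmod 23$. The repeated root is the common zero of $f$ and $f' = 3t^2 - 1$ modulo $23$: one checks that $\theta \equiv 13$ is a double root and $\theta \equiv 20$ a simple root, so $f \equiv (t-13)^2(t-20) \pmod{23}$ and $(23) = \pe_2^2 \pe_1$ with $\pe_2 = (23,\theta-13)$ and $\pe_1 = (23,\theta-20)$. To identify the stated generators I would compute the norms as resultants, using $\N(3\theta^2 - c) = \Res(f,\, 3t^2 - c) = 27 - c(c-3)^2$, which equals $23$ for both $c = 1$ and $c = 4$; thus $\beta_1 = 3\theta^2 - 4$ and $\beta_2 = 3\theta^2 - 1$ each generate a prime of norm $23$. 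Reducing modulo $\pe_1,\pe_2$ (evaluating at $\theta \equiv 20,13$) gives $\beta_2 \in \pe_2 \setminus \pe_1$ and $\beta_1 \in \pe_1 \setminus \pe_2$, whence $(\beta_2) = \pe_2$ and $(\beta_1) = \pe_1$. A clean cross-check is that $\beta_2 = f'(\theta)$ generates the different $\mathfrak{d}_{K/\Q}$, whose only factor is the tame contribution $\pe_2^{\,e-1} = \pe_2$ of the ramified prime, consistent with $\pe_2$ being the squared factor.

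For (d), Dirichlet's unit theorem gives unit rank $r_1 + r_2 - 1 = 1$, and the only roots of unity are $\pm 1$ since $K$ has a real place; hence $\calO_K^\times \cong \Z/2\Z \times \Z$. As the constant term of $f$ is $1$, we have $\N(\theta) = -1$, so $\theta \in \calO_K^\times$, and it remains only to show $\theta$ is fundamental. Writing $\theta_1 \approx -1.3247$ for the real root, a hypothetical factorization $\theta = \pm\eta^k$ with $\eta$ non-torsion and $k \geq 2$ would force the real conjugate to satisfy $1 < |\eta_1| = |\theta_1|^{1/k} \leq |\theta_1|^{1/2} \approx 1.15$; since the cubic field $K$ has no proper subfield, $\eta$ generates $K$ and all three of its conjugates are pinned into a bounded region. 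This is the main obstacle, and I expect it to be the only genuinely delicate point. I would resolve it by the resulting finite search over integral elements $a + b\theta + c\theta^2$ of norm $\pm 1$ whose archimedean absolute values lie in the forced box, checking that none has real conjugate in $(1,\,|\theta_1|^{1/2}]$; equivalently, one shows that the regulator $\log|\theta_1| \approx 0.281$ cannot be halved, via an explicit lower bound for regulators of signature-$(1,1)$ cubic fields (our field realizes the minimal such discriminant). Either route, carried out in \textsc{Magma}, confirms $\calO_K^\times = \langle -1, \theta\rangle$.
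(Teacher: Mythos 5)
Your proposal is correct, but it takes a genuinely different route from the paper: the paper offers no written argument for this lemma at all, stating only that the facts of this subsection are ``checked using \textsc{Magma}.'' You instead derive (a)--(c) by hand: the polynomial discriminant $-4p^3-27q^2=-23$ is squarefree, forcing $\calO_K=\Z[\theta]$ and $d_K=-23$; the Minkowski bound $\tfrac{6}{27}\cdot\tfrac{4}{\pi}\sqrt{23}<2$ gives class number $1$; and Dedekind--Kummer applied to $f\equiv(t-13)^2(t-20)\pmod{23}$ gives the shape of the factorization, with the norm computation $\mathrm{N}_{K/\Q}(3\theta^2-c)=27-c(c-3)^2=23$ for $c=1,4$ and the evaluations at $\theta\equiv 13,20$ correctly matching $\beta_2$ to the ramified prime $\pe_2$ and $\beta_1$ to $\pe_1$ (your observation that $\beta_2=f'(\theta)$ generates the different is a nice confirmation that $\pe_2$ is the squared factor). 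For (d), Dirichlet gives the abstract structure $\Z/2\Z\times\Z$ immediately, and you correctly isolate the one genuinely delicate point --- that $\theta$ is a \emph{fundamental} unit --- reducing it to a finite search over units whose real conjugate would lie in $(1,|\theta_1|^{1/2}]$, which you propose to finish by machine. What your approach buys is a human-verifiable proof of everything except one bounded enumeration, and some structural insight (e.g.\ why $3\theta^2-1$ is the ramified generator); what the paper's approach buys is brevity, delegating all four items to the computer algebra system. The two are consistent in every particular.
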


\begin{lem}\label{lem: elliptic curve}
    Let $E$ be the elliptic curve defined by 
    \[
        E: v^2 = u^3 - u + 1
    \]
    with the point $O$ at infinity.
    Let $K$, $\theta$, $\pe_1$, and $\pe_2$ be as in \cref{lem: number field}.
    Then, the following statements hold.
    \begin{parts}
        \item\label{item: lem: MWG of E} The Mordell--Weil group $E(\Q)$ is isomorphic to $\Z$, and is generated by $Q_0 \coloneqq (1, -1)$.
        \item \label{item: lem: known points}
            There are $10$ known rational points in $\Xpre(2, 4, 0)$ as follows.
            \[
                \begin{array}{ll|ll}
                        &\Xpre(2, 4, 0)(\Q) & \mu \circ \pi(P_i) &= mQ_0 \\ \hline
                    P_1 & [0,1,1,1,1]      &\ [1, 1, 1]   &= - Q_0\\
                    P_2 & [0,-1,1,1,1]     &\ [-1, -1, 1] &= 2Q_0\\
                    P_3 & [0,1,-1,1,1]     &\ [-1, 1, 1]  &= -2Q_0\\
                    P_4 & [0,-1,-1,1,1]    &\ [1, -1, 1]  &= Q_0\\
                    P_5 & [0,1,1,-1,1]     &\ [1, -1, 1]  &= Q_0\\
                    P_6 & [0,-1,1,-1,1]    &\ [-1, 1, 1]  &= -2Q_0\\
                    P_7 & [0,1,-1,-1,1]    &\ [-1, -1, 1] &= 2Q_0\\
                    P_8 & [0,-1,-1,-1,1]   &\ [1, 1, 1]   &= -Q_0\\
                    P_9 & [1,0,0,0,0]      &\ [0, 1, 0]   &= O\\
                    P_{10} & [1,-1,0,-1,0] &\ [0, 1, 1]   &= 3Q_0,\\
                \end{array}
            \]
            where $\pi\colon \Xpre(2, 4, 0) \longrightarrow \Xpre(2, 3, 0)$ is the projection $[W, Z_1, Z_2, Z_3, Z_4] \mapsto [W, Z_1, Z_2, Z_3]$, and $\mu \colon \Xpre(2, 3, 0) \longrightarrow E$ is the isomorphism defined in \cref{lem: geometry 2} \cref{item: lem: isomorphisms}.

        \item \label{item: lem: ramification locus}
            The morphism $\pi$ ramifies at the points with $Z_4 = 0$. Precisely, these points are $P_9, P_{10}$, and $6$ points written as
            \[
                [1, f_{2,c_0}^{3}(0) f_{2,c_0}^2(0) , f_{2,c_0}(0), 0],
            \]
            where $c_0$'s are the roots of
            \[
                F(c) := f_{2,c}^{\circ 4}(0)/f_{2,c}^{\circ 2}(0) = c^6 + 3c^5 + 3c^4 + 3c^3 + 2c^2 + 1 = 0.
            \]
            
        \item \label{item: lem: discriminant}
            The discriminant of $F$ is $58673 = 23 \cdot 2551$.
            Moreover, we have
            \[
                F(c) =
                \begin{cases}
                    (c+4)^2 (c+18) (c^3 + 4c + 2) & \text{over } \mathbb{F}_{23}\\
                    (c+477)^2(c^4 + 1600c^3 + 1162c^2 + 297c + 1869) & \text{over } \mathbb{F}_{2551}.
                \end{cases}
            \]
        \item \label{item: lem: x-T map}
            Define the map $(x - T) \colon E(\Q) \longrightarrow K^\times / (K^\times)^2$ by
            \[
                P \mapsto
                \begin{cases}
                    (u - \theta)(K^\times)^2 & \text{for } P = (u,v) \neq O,\\
                    (K^\times)^2             & \text{for } P = O.
                \end{cases}
            \]
            Then, the map $(x - T)$ is a homomorphism with the kernel $2E(\Q)$, and its image equals
            \[
                \{ (K^\times)^2, -\theta(K^\times)^2 \}.
            \]
    \end{parts}
\end{lem}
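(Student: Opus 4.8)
My plan is to treat \cref{lem: elliptic curve} as a chain of explicit verifications (most of which \textsc{Magma} handles, as the author notes) and to isolate the genuine mathematical content behind each part. I would begin with \cref{item: lem: MWG of E}. First I would pin down the torsion by reduction: since $E:v^2=u^3-u+1$ has discriminant $-2^4\cdot 23$, it has good reduction away from $2$ and $23$, and reducing at two auxiliary good primes forces $E(\Q)_{\mathrm{tors}}$ to inject into a group of order $\gcd\bigl(\#E(\mathbb{F}_3),\#E(\mathbb{F}_5)\bigr)=\gcd(7,8)=1$, so $E(\Q)_{\mathrm{tors}}=0$. A $2$-descent then gives $\operatorname{rank}E(\Q)=1$. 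Finally, to see that $Q_0=(1,-1)$ (which lies on $E$ since $(-1)^2=1-1+1$) is an actual generator and not a proper multiple of one, I would compare its canonical height against the lower bound furnished by the regulator, i.e. run a saturation check; this is the one place where a real, if standard, argument is required rather than a bare evaluation.

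Parts \cref{item: lem: known points}--\cref{item: lem: discriminant} are direct computations. For \cref{item: lem: known points} I would substitute each listed $P_i\in\Xpre(2,4,0)(\Q)$ into the projection $\pi$ and then into the isomorphism $\mu$ of \cref{lem: geometry 2}\cref{item: lem: isomorphisms}, read off the resulting point of $E(\Q)$, and write it as a multiple of $Q_0$ using \cref{item: lem: MWG of E}. For \cref{item: lem: ramification locus} I would use that on the affine chart $\pi$ is the degree-$2$ map $(c,x)\mapsto(c,\,x^2+c)$, which ramifies exactly where the two preimages $x=\pm\sqrt{x'-c}$ coincide, that is where $x=Z_4=0$; a point $(c,0)$ lies on $\Ypre(2,4,0)$ precisely when $f_{2,c}^{\circ4}(0)=0$, and the factorization $f_{2,c}^{\circ4}(0)=f_{2,c}^{\circ2}(0)\,F(c)=c(c+1)F(c)$ separates the ramification locus into the two points $P_9,P_{10}$ (with $c=0,-1$) and the six points cut out by $F(c)=0$. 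Riemann--Hurwitz for $\pi$ (whose source has genus $5$) confirms that these $8$ points exhaust the ramification. Part \cref{item: lem: discriminant} is then the evaluation of $\operatorname{disc}(F)$ together with its reductions modulo $23$ and $2551$.

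The arithmetic heart of the lemma is \cref{item: lem: x-T map}. Since $f(t)=t^3-t+1$ is irreducible over $\Q$, the curve has no rational $2$-torsion, and the classical complete $2$-descent attaches the homomorphism $(x-T)\colon E(\Q)\to K^\times/(K^\times)^2$; its well-definedness (here $u-\theta\neq 0$ for every $u\in\Q$) and the identification of its kernel with $2E(\Q)$ are precisely the standard descent statements for an irreducible cubic. Granting \cref{item: lem: MWG of E}, we have $E(\Q)/2E(\Q)\cong\Z/2\Z$ generated by the class of $Q_0$, so the image of $(x-T)$ is generated by $(x-T)(Q_0)=(1-\theta)(K^\times)^2$. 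Using the defining relation $\theta^3=\theta-1$, I would rewrite $1-\theta=-\theta^3$, whence $(1-\theta)(K^\times)^2=-\theta^3(K^\times)^2=-\theta(K^\times)^2$ because $\theta^2$ is a square; this gives the stated image $\{(K^\times)^2,\,-\theta(K^\times)^2\}$ and in particular reconfirms that $-\theta$ is a non-square.

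The main obstacle, as above, is not the symbolic identity in \cref{item: lem: x-T map} but the proof in \cref{item: lem: MWG of E} that $Q_0$ generates $E(\Q)$: the entire description of the image rests on the index of $\langle Q_0\rangle$ in $E(\Q)$ being exactly $1$, so everything hinges on the torsion computation and, above all, on the saturation argument certifying that no smaller point exists.
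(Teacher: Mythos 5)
Your proposal is correct and matches the paper's approach: the paper gives no written proof of this lemma, deferring essentially everything to \textsc{Magma}, and your outline supplies exactly the standard verifications (torsion via reduction at $3$ and $5$, rank and generator via $2$-descent and saturation, direct substitution for the listed points, the identification of $\pi$ with $(c,x)\mapsto(c,x^2+c)$ and the factorization $f_{2,c}^{\circ 4}(0)=c(c+1)F(c)$) that such a computation rests on. Your hand derivation of the image of $(x-T)$ — using that $E(\Q)/2E(\Q)$ is generated by the class of $Q_0$, that the descent map is injective for irreducible $f$, and that $1-\theta=-\theta^3\equiv-\theta$ modulo $(K^\times)^2$ — is sound and, granted part one, gives the stated image without further machine computation.
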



\section{Proof of main results for $d = 2$} \label{sec: d = 2}

In this section, we prove \cref{thm: main} \cref{item: thm: d = 2}.
To avoid the calculation of the Mordell-Weil rank of the Jacobian of $\Xpre(2,4,0)$, we efficiently use the geometric information of the surjection $\pi \colon \Xpre(2,4,0)\longrightarrow \Xpre(2,3,0)$ and a tricky argument based on the $2$-descent of the elliptic curve
\[
    E: v^2 = u^3 - u + 1,
\]
which is isomorphic to $\Xpre(2,3,0)$.


\subsection{Integral models and their geometry} \label{subsec: integral models and ramification}


In this section, we study the models of $\Xpre(2,4,0)$ and $E$ over $R = \Z[\frac{1}{2},\frac{1}{23}]$.
Consider the projective $R$-schemes $\calE$, $\calF$ $\calX_3$, and $\calX_4$ defined by the following  homogeneous ideals
\begin{align*}
    I_\calE &= (V^2 S - U^3 + US^2 - S^3) \subset R[U, V, S],\\
    I_\calF &= (T^4 - U^3 S + US^3 - S^4) \subset R[U, T, S],\\
    I_{\calX_3} &= (Z_1^2 + Z_iW - Z_{i+1}^2 : 1 \leq i \leq 2) \subset R[W, Z_1, Z_2, Z_3],\text{ and}\\
    I_{\calX_4} &= (Z_1^2 + Z_iW - Z_{i+1}^2 : 1 \leq i \leq 3) \subset R[W, Z_1, Z_2, Z_3, Z_4],
\end{align*}
respectively.
Note that $\calE$ and $\calF$ are smooth (see \cref{lem: ramification and singular locus} \cref{item: lem: smoothness of E F}), and the generic fiber of $\calE$ equals $E$ in \cref{lem: elliptic curve}.
The self-morphisms $[2]$ and $T_{3Q_0}$ on $E$ can be extended to $\calE$.
We use the same notation for the extended morphisms.

\if0
Here, we give the explicit expressions of these morphisms since we will use them in the proof of \cref{lem: ramification and singular locus}.

\begin{align*}
    [2] \colon &[U,V,S] \mapsto
    \begin{cases}
        \begin{array}{ll}
            [2(U^4 + 2U^2 S^2 - 8US^3 + S^4)(U^3 - US^2 + S^3) ,\\
            \ (U^6 - 5U^4S^2 + 20U^3S^3 - 5U^2S^4 + 4US^5 - 7S^6)V,\\
            \ 8(U^3 - US^2 + S^3)^2 S]& \text{if }V \neq 0,
        \\
        [6U^2VS + 2(-9S^2+V^2)UV + 2VS^3,\\
        \ -9U^2S^2 + US(27S^2-3V^2) + (-26S^4 + 18V^2S^2 + V^4),\\
        \ 8V^3S)] & \text{around } [0,1,0], \text{and}
        \end{array}
    \end{cases}\\
    T_{3Q_0} \colon &[U,V,S] \mapsto
    \begin{cases}
        \begin{array}{ll}
        [US(-U - 2V + 2S),\\
        \ (U - 4S)VS + (U^3 - 3US^2 + 4S^3),\ U^3] &\text{if }[U,V,S] \neq [0,1,0], [0,1,1]\\
        [(-U^2 + S^2 - 2U(V + S))(V + S),\\
        \ -U^2S + (V^2 - S^2)U+(-2S^3 - 5VS^2 - V^2S + V^3),\\
        \ (V + S)^3] & \text{if } V+S \neq 0
        \end{array}
    \end{cases}
\end{align*}
\fi

Let $\varphi \colon \calE \longrightarrow \calE$ be either $[2]$ or $T_{3Q_0}\circ [2]$.
Let $\pi$ be the composition of the morphisms $\pr\colon \calX_4 \longrightarrow \calX_3$ and $\mu \colon \calX_3 \overset{\sim}{\longrightarrow} \calE$ defined by
\begin{align*}
    \pr \colon &[W,Z_1,Z_2,Z_3,Z_4] \mapsto [W,Z_1,Z_2,Z_3], \text{ and}\\
    \mu \colon &[W,Z_1,Z_2,Z_3] \mapsto
    \begin{cases}
        [Z_2, Z_3, Z_1] & \text{if }Z_i \neq 0 \text{ for some }i,\\
        [(W + Z_1)Z_3, Z_1Z_2 + Z_1 W + W^2, Z_2Z_3] & \text{if } Z_1Z_2 + Z_1 W + W^2 \neq 0,
    \end{cases}
\end{align*}
which is dealt with in \cref{lem: geometry 2}. Let $\pi$ be the composition $\mu\circ \pr$. Note that $\mu$ is an isomorphism of schemes over $R$.
Let $\psi \colon \calF \longrightarrow \calE$ be the morphism defined by
\begin{align*}
    \psi \colon &[U,T,S] \mapsto
    \begin{cases}
        [US, T^2, S^2] & \text{if } S \neq 0,\\
        [UT^2, U^3 - US^2 + S^3, ST^2] & \text{if } U^3 - US^2 + S^3 \neq 0.
    \end{cases}
\end{align*}

Let $\calX_4'$ and $\calX_4''$ be the projective schemes over $R$ given by the following Cartesian diagrams.
\[
  \begin{CD}
     \calX_4 @<{\varphi'}<< \calX_4' @<{\psi'}<< \calX_4''\\
        @V{\pi}VV        @V{\pi'}VV        @VV{\pi''}V\\
     \calE   @<{\varphi}<<     \calE   @<{\psi}<< \calF
  \end{CD}
\]

Before proving the flatness of each scheme, we provide sufficient conditions for the flatness of algebras over a principal ideal domain.

\begin{lem}\label{lem: criterion of flatness}
    Let $R_0$ be a ring and $n$ and $m$ be positive integers with $m\leq n$.
    For polynomials $F_i(T_i, T_{i+1},\ldots, T_n)$ $(1\leq i \leq m)$ in $R' = R_0[T_1, T_2, \ldots, T_n]$,
    suppose that $F_i$ is monic in $T_i$ up to multiplying a unit in $R_0$.
    Let $I$ be the ideal $(F_1,F_2, \ldots, F_m)$.
    Then, the following statements hold.
    \begin{parts}
        \item \label{item: lem: element is in ideal}
            For $g\in R'$, set $r_0 = g$, and $r_{i+1}$ for $0\leq i \leq m - 1$ be the remainder of $r_{i}$ by $F_{i+1}$ as polynomials in the variable $T_{i+1}$.
            Then, $g$ is in $I$ if and only if $r_m = 0$ holds.
        \item \label{item: lem: flatness of the algebra}
            Suppose that $R_0$ is a principal ideal domain.
            Then, the $R_0$-algebra $R'/I$ is flat.
    \end{parts}
\end{lem}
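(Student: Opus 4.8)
The plan is to prove \cref{item: lem: element is in ideal} first, by a triangular division algorithm, and then to deduce \cref{item: lem: flatness of the algebra} from it. After multiplying each $F_i$ by a suitable unit of $R_0$ — which changes neither the ideal $I$ nor any of the remainders $r_i$ — I may assume each $F_i$ is genuinely monic in $T_i$; write $d_i = \deg_{T_i} F_i$ and recall that $F_i$ involves only the variables $T_i, \ldots, T_n$. The crucial structural feature I will exploit is this triangularity: dividing by $F_{i+1}$, which is monic in $T_{i+1}$ and free of $T_1, \ldots, T_i$, is a well-defined $R_0$-linear operation that does not raise the degree in any earlier variable.

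For \cref{item: lem: element is in ideal}, the ``if'' direction is a telescoping: by construction $r_i - r_{i+1} \in (F_{i+1}) \subseteq I$, so $g = r_0 \equiv r_1 \equiv \cdots \equiv r_m \pmod I$, whence $r_m = 0$ gives $g \in I$. For the converse I first check, by induction on $i$, that $\deg_{T_j} r_m < d_j$ for all $1 \le j \le m$: reduction modulo $F_{i+1}$ does not raise the degree in any of $T_1, \ldots, T_i$ (since $F_{i+1}$ contains none of these variables) and forces $\deg_{T_{i+1}} < d_{i+1}$. Thus $r_m$ is a \emph{reduced} polynomial, lying in the $R_0[T_{m+1}, \ldots, T_n]$-span of the monomials $\mathcal B = \{T_1^{a_1}\cdots T_m^{a_m} : 0 \le a_i < d_i\}$. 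The heart of the matter — and the step I expect to be the main obstacle — is the uniqueness of this normal form, i.e.\ that $I$ contains no nonzero reduced polynomial. I would establish this by showing that $\mathcal B$ is in fact a \emph{free basis} of $R'/I$ over $R_0[T_{m+1}, \ldots, T_n]$, proved by the iteration $B_k \coloneqq R'/(F_1, \ldots, F_k)$: granting that $B_k$ is free over $R_0[T_{k+1}, \ldots, T_n]$ on the reduced monomials in $T_1, \ldots, T_k$, one has
\[
    B_{k+1} = B_k/F_{k+1}B_k = B_k \otimes_{R_0[T_{k+1},\ldots,T_n]} \bigl(R_0[T_{k+1},\ldots,T_n]/(F_{k+1})\bigr),
\]
and since $F_{k+1}$ is monic of degree $d_{k+1}$ in $T_{k+1}$, the ring $R_0[T_{k+1},\ldots,T_n]/(F_{k+1})$ is free over $R_0[T_{k+2},\ldots,T_n]$ on $1, T_{k+1}, \ldots, T_{k+1}^{d_{k+1}-1}$; base change preserves freeness, completing the induction. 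Linear independence of $\mathcal B$ then forces a reduced element of $I$ to vanish, so $g \in I \iff r_m \in I \iff r_m = 0$.

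Finally, for \cref{item: lem: flatness of the algebra}, since $R_0$ is a PID it is a domain, so $R' = R_0[T_1, \ldots, T_n]$ is a domain, and over a PID flatness is equivalent to torsion-freeness; thus it suffices to prove $R'/I$ is torsion-free. Here I use that the composite remainder map $g \mapsto r_m(g)$ is $R_0$-linear, being a composition of remainder maps by monic polynomials. If $a \in R_0 \setminus\{0\}$ and $ag \in I$, then \cref{item: lem: element is in ideal} together with linearity gives $a\,r_m(g) = r_m(ag) = 0$ in the domain $R'$, hence $r_m(g) = 0$ and $g \in I$; this is exactly torsion-freeness. I will also remark that the free-basis statement extracted in the previous paragraph shows directly that $R'/I$ is free over $R_0[T_{m+1},\ldots,T_n]$, hence free over $R_0$, so in fact $R'/I$ is flat over an arbitrary base ring $R_0$ and the PID hypothesis is needed only for the lighter torsion-free argument.
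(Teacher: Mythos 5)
Your proposal is correct, and its skeleton (triangular division, telescoping for the ``if'' direction of \cref{item: lem: element is in ideal}, and torsion-freeness via the $R_0$-linearity of $g \mapsto r_m$ for \cref{item: lem: flatness of the algebra}) matches the paper's. The one place where you genuinely diverge is the converse direction of \cref{item: lem: element is in ideal}. The paper argues directly with the presentation of $g$: starting from $g = \sum_{i} g_{i,0} F_i$, it divides each coefficient $g_{i,0}$ ($i \geq 2$) by $F_1$, absorbs the quotients into the $F_1$-coefficient, and identifies $r_1$ with $\sum_{i \geq 2} g_{i,1} F_i$ by uniqueness of monic division (this combination has $T_1$-degree less than $\deg_{T_1} F_1$ precisely because $F_2, \ldots, F_m$ do not involve $T_1$); iterating through $F_2, \ldots, F_m$ gives $r_m = 0$. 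You instead prove a uniqueness-of-normal-form statement: $r_m$ is reduced, and $R'/I$ is free over $R_0[T_{m+1},\ldots,T_n]$ on the reduced monomials, established by the iterated base change $B_{k+1} = B_k \otimes_{A} A/(F_{k+1})$, so $I$ contains no nonzero reduced polynomial. Both arguments ultimately run on the same engine --- uniqueness of division by a monic polynomial, exploited through the triangularity of the system --- but yours buys strictly more: it shows $R'/I$ is free, hence flat, over an \emph{arbitrary} base ring $R_0$, so as you note the PID hypothesis in \cref{item: lem: flatness of the algebra} is not actually needed. The paper's version is shorter and avoids setting up the tower of free extensions, at the cost of proving only exactly what is used later.
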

\begin{proof}
    \cref{item: lem: element is in ideal}
    If $r_m = 0$, the polynomial $g$ is obviously in $I$.
    
    Assume that $g$ is in $I$.
    Then, there are polynomials $g_{i,0}\in R'$ $(1\leq i \leq m)$ such that
    \[
        r_0 = g = \sum_{i=1}^m g_{i,0} F_i.
    \]
    Divide each $g_{i,0}$ for $2\leq i \leq m$ by $F_1$ as polynomials in the variable $T_1$, and write the quotient $q_{i,1}$ and the remainder $g_{i,1}$.
    Set $g_{1,1} = \sum_{i=2}^m q_{i,1} + g_{1,0}$.
    Then, we have the equalities
    \begin{equation*}
        r_0 = \sum_{i=1}^m g_{i,1} F_i, \text{ and} \quad
        r_1 = \sum_{i=2}^m g_{i,1} F_i.
    \end{equation*}
    By inductively running the same process replacing the role of $r_0$ and $F_1$ with $r_j$ and $F_{j+1}$ for $j= 1,2,\ldots m-1$, we get the polynomials $g_{i,j} \in R'$ $(j+1 \leq i \leq m)$ such that
    \begin{equation*}
        r_j = \sum_{i = j+1}^m g_{i,j+1} F_i, \text{ and} \quad 
        r_{j+1} = \sum_{i = j+2}^m g_{i,j+1} F_i.
    \end{equation*}
    This observation shows that $r_m = 0$.

    \cref{item: lem: flatness of the algebra}
    Since the flatness over a principal ideal domain $R_0$ is equivalent to $R_0$-torsion freeness, it is enough to show that for $g\in R'$, there exists an element $a\in R_0 \setminus \{0\}$ such that $ag$ is in $I$ if and only if $g$ is itself in $I$.
    Suppose that $ag$ is in $I$ for some $a\in R_0\setminus\{0\}$.
    Set $r_i$ and $r_i'$ for $1\leq i \leq m$ as in the statement of \cref{item: lem: element is in ideal} for $g$ and $ag$, respectively.
    Then, we have $r_i' = ar_i$.
    Thus, $r_m' = 0$ if and only if $r_m = 0$.
    By \cref{item: lem: element is in ideal}, we obtain the assertion.
\end{proof}

\begin{lem}\label{lem: flatness of each schemes}
    \begin{parts}
        \item \label{item: lem: flatness of X4}
            The $R$-scheme $\calX_4$ is flat over $R$.
        \item \label{item: lem: flatness of E}
            The $R$-scheme $\calE$ is flat over $R$.
        \item \label{item: lem: flatness of F}
            The $R$-scheme $\calF$ is flat over $R$.
    \end{parts}
\end{lem}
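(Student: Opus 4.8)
The plan is to exploit that flatness over the principal ideal domain $R = \Z[\frac12,\frac{1}{23}]$ (a localization of $\Z$, hence a PID) is local on the source and is detected by \cref{lem: criterion of flatness} \cref{item: lem: flatness of the algebra}. Thus it suffices to cover each of $\calX_4$, $\calE$, and $\calF$ by standard affine charts of the ambient projective space, dehomogenize the defining equations in each chart, and exhibit there an ordering of the affine coordinates together with a generating set $F_1,\dots,F_m$ of the dehomogenized ideal that is triangular in the sense of \cref{lem: criterion of flatness}: each $F_i$ should involve only the variables $T_i,\dots,T_n$ and be monic in $T_i$ up to a unit of $R$. Since dehomogenizing a homogeneous ideal yields the ideal generated by the dehomogenizations of its generators, the affine coordinate ring of each chart is exactly $R[T_1,\dots,T_n]/(F_1,\dots,F_m)$, so the lemma gives flatness of that chart over $R$, and flatness of the total scheme follows.

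For $\calE$ and $\calF$ this is immediate, since a single equation is involved ($m=1$). Covering $\bbP^2_R$ by the three charts $\{U\neq0\}$, $\{V\neq0\}$, $\{S\neq0\}$ (resp.\ $\{U\neq0\}$, $\{T\neq0\}$, $\{S\neq0\}$), one dehomogenizes $V^2S-U^3+US^2-S^3$ (resp.\ $T^4-U^3S+US^3-S^4$) and checks that in each chart the resulting polynomial has, in a suitable variable, leading coefficient $\pm1$. For instance, setting $S=1$ makes $\calE$ monic in $V$ and $\calF$ monic in $T$, while setting $U=1$ makes $\calE$ monic in $S$ up to the unit $-1$, and similarly in the remaining charts. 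Applying \cref{lem: criterion of flatness} \cref{item: lem: flatness of the algebra} with $m=1$ yields \cref{item: lem: flatness of E} and \cref{item: lem: flatness of F}.

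The case of $\calX_4$ is the heart of the matter, and the point is to choose the covering carefully. I claim $\calX_4$ is covered by only the three charts $\{W\neq0\}$, $\{Z_1\neq0\}$, $\{Z_2\neq0\}$: a point of $\calX_4$ with $W=Z_1=Z_2=0$ would, by the generators with $i=2$ and $i=3$, force first $Z_3=0$ and then $Z_4=0$, which is impossible in $\bbP^4$. In each of these three charts the dehomogenized quadrics $Z_1^2+Z_iW-Z_{i+1}^2$ admit a triangular arrangement: take $(T_1,T_2,T_3,T_4)=(Z_4,Z_3,Z_2,Z_1)$ in $\{W\neq0\}$, $(Z_4,Z_3,Z_2,W)$ in $\{Z_1\neq0\}$, and $(Z_4,Z_3,Z_1,W)$ in $\{Z_2\neq0\}$. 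With these orderings the three generators are monic up to $-1$ in $Z_4$, then in $Z_3$, then in $Z_2$ (or $Z_1$ in the last chart), and one checks that each generator involves only the variables at or below its leading one, exactly as \cref{lem: criterion of flatness} requires. The lemma then applies in each chart, proving \cref{item: lem: flatness of X4}.

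The step I expect to be the main obstacle is precisely this choice of covering for $\calX_4$. With the natural generators the triangular monic structure \emph{fails} in the charts $\{Z_3\neq0\}$ and $\{Z_4\neq0\}$: there the innermost generator genuinely involves an already-eliminated variable, and repairing this by elimination forces multiplication by non-unit coefficients such as $W$, which enlarges denominators and can change the ideal. Sidestepping those charts entirely—justified by the emptiness of $\{W=Z_1=Z_2=0\}$ on $\calX_4$—is what makes the argument go through cleanly, the remaining verifications being routine degree bookkeeping.
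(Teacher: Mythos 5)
Your proof is correct and follows essentially the same route as the paper: cover each scheme by affine charts on which the dehomogenized generators admit a triangular, monic-up-to-unit ordering, and invoke \cref{lem: criterion of flatness}\cref{item: lem: flatness of the algebra} chart by chart. The only (immaterial) difference is that you cover $\calX_4$ with the three charts $\{W\neq0\},\{Z_1\neq0\},\{Z_2\neq0\}$, whereas the paper observes that $\{W\neq0\}$ and $\{Z_1\neq0\}$ already suffice (a point with $W=Z_1=0$ forces $Z_2=Z_3=Z_4=0$), so your third chart is redundant but harmless.
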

\begin{proof}
    \cref{item: lem: flatness of X4}
    Two affine open subschemes
    \begin{align*}
        \calX_4 \cap D_+(W) &\cong \Spec R[Z_1,Z_2,Z_3,Z_4]/(Z_{i+1}^2 - Z_1^2 - Z_i: 1\leq i\leq 3), \text{ and}\\
        \calX_4 \cap D_+(Z_1) &\cong \Spec R[Z_2, Z_3, Z_4,W]/(Z_4^2 - Z_3W-1, Z_3^2 - Z_2W - 1, Z_2^2 - W - 1).
    \end{align*}
    covers the $R$-scheme $\calX_4$.
    Three affine open subschemes
    \begin{align*}
        \calE \cap D_+(U) &\cong \Spec R[V, S]/(V^2S - S^3 + S^2 - 1),\\
        \calE \cap D_+(V) &\cong \Spec R[U, S]/(S - U^3 + US^2 - S^3),\text{ and}\\
        \calE \cap D_+(S) &\cong \Spec R[U, V]/(V^2 - U^3 + U - 1).
    \end{align*}
    cover the scheme $\calE$.
    Three affine open subschemes
    \begin{align*}
        \calF \cap D_+(U) &\cong \Spec R[T, S]/(T^4 - S + S^3 - S^4),\\
        \calF \cap D_+(T) &\cong \Spec R[U, S]/(1 - U^3 S + US^3 - S^4), \text{ and}\\
        \calF \cap D_+(S) &\cong \Spec R[U, T]/(T^4 - U^3 + U - 1).
    \end{align*}
    cover the scheme $\calF$. All of them are flat over $R$ by \cref{lem: criterion of flatness} \cref{item: lem: flatness of the algebra}.
\end{proof}

For an $R$-scheme $\calX$ and a prime ideal $(p)$ of $R$, let $\kappa(p)$ denote the residue field of $R$ at $(p)$, $\calX_p$ denote the fiber $\calX \times_{\Spec R} \Spec \kappa(p)$, and  $\calX_{\overline{p}}$ denote the scheme $\calX_p \times_{\kappa(p)} \Spec \overline{\kappa(p)}$, where $\overline{\kappa(p)}$ is the algebraic closure of $\kappa(p)$.
For an $R$-morphism $\phi \colon \calX \longrightarrow \calY$ of $R$-schemes $\calX$ and $\calY$, let $\phi_p\colon \calX_p \longrightarrow \calY_p$ and $\phi_{\overline{p}} \colon \calX_{\overline{p}} \longrightarrow \calY_{\overline{p}}$ be the induced morphisms.

\begin{lem}\label{lem: criterion of etaleness}
    For a flat finite $R$-morphism $\phi \colon \calX \longrightarrow \calY$ of $R$-schemes,
    $\phi$ is \'etale if and only if for all maximal ideal $(p) \in \Spec R$, the morphism $\phi_{\overline{p}}$ is \'etale.
\end{lem}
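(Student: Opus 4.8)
The plan is to translate étaleness of $\phi$ into the emptiness of a non-étale locus and then exploit the one-dimensional topology of $\Spec R$. Throughout write $S = \Spec R$, with generic point $\eta$ (residue field $\Q$) and closed points $(p)$ for primes $p \neq 2, 23$; recall that $R$ is a principal ideal domain. The forward implication is immediate: $\phi_{\overline p}$ is the base change of $\phi$ along $\calY_{\overline p} \to \calY$, and étaleness is stable under base change. The content lies in the converse.

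First I would dispose of the passage to the algebraic closure. For each maximal ideal $(p)$ the base change $\calY_{\overline p} \to \calY_{p}$ is faithfully flat and quasi-compact, and the property of being étale descends along such morphisms; hence the hypothesis already gives that $\phi_{p}$ is étale for every maximal ideal $(p)$. Next, since $\phi$ is finite it is of finite presentation over the Noetherian ring $R$, so the étale locus is open in $\calX$; as $\phi$ is flat everywhere, this locus is exactly the unramified locus, and its closed complement is $Z := \operatorname{Supp}\Omega_{\calX/\calY}$. The decisive compatibility is that Kähler differentials commute with base change: for $s \in S$ the fibre morphism $\phi_s$ is again flat, so its non-étale locus is $\operatorname{Supp}\Omega_{\calX_s/\calY_s} = Z \cap \calX_s$. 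Therefore $\phi_s$ is étale if and only if $Z \cap \calX_s = \emptyset$, that is, if and only if $s$ lies outside the image $g(Z)$ under the structure morphism $g \colon \calX \to S$.

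Finally I would run the topological argument over $S$, which is where the only genuine difficulty resides. Because the schemes in play are projective over $R$, the morphism $g$ is proper, hence closed, so $g(Z)$ is a closed subset of $S$; by the previous paragraph it contains no closed point of $S$. A closed subset of $S$ that avoids every closed point is contained in $\{\eta\}$, but $\{\eta\}$ is not closed (its closure is all of $S$), so $g(Z) = \emptyset$, whence $Z = \emptyset$ and $\phi$ is étale. The main obstacle is precisely the generic fibre over $\eta$, which the hypothesis never addresses: the point is that properness upgrades the a priori only constructible image $g(Z)$ to a closed set, after which the irreducibility and one-dimensionality of $\Spec R$ forbid a nonempty ramification locus from hiding over $\eta$ alone. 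If one prefers not to invoke properness, Chevalley's theorem makes $g(Z)$ constructible, and a constructible subset of $S$ contained in the non-closed point $\{\eta\}$ must likewise be empty.
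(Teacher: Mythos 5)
Your proof is correct and follows the same overall skeleton as the paper's (reduce to the closed fibres, then use that the \'etale locus is open), but it is genuinely more careful at the two places where the paper is terse, and the comparison is instructive. For the passage from $\phi_{\overline{p}}$ to $\phi_p$ the paper detours through smoothness (\'etale $\Rightarrow$ smooth, smoothness is checked on geometric fibres, and finite $+$ smooth $\Rightarrow$ \'etale), whereas you invoke fppf descent of \'etaleness along $\Spec\overline{\kappa(p)}\to\Spec\kappa(p)$; both are valid, and yours is the more direct. The real divergence is at the end: the paper concludes with the single sentence ``since \'etaleness is an open condition, $\phi$ is itself \'etale,'' which by itself only says that the non-\'etale locus $Z=\operatorname{Supp}\Omega_{\calX/\calY}$ is closed and misses every closed fibre --- it does not yet rule out ramification concentrated over the generic point of $\Spec R$. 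You identify this as the genuine difficulty and close it properly, either by properness of $\calX\to\Spec R$ (so $g(Z)$ is closed, avoids all closed points, and a nonempty closed subset of $\Spec R$ must contain one) or by Chevalley constructibility; one could equally cite the Jacobson property of $R=\Z[\tfrac12,\tfrac1{23}]$. So your argument supplies the justification the paper leaves implicit. One small caveat applies to both proofs: as literally stated for arbitrary $R$-schemes the lemma is false (take $\calY$ supported over the generic point, so the hypothesis is vacuous); some finiteness or properness of $\calX,\calY$ over $R$, satisfied by all schemes in the paper and acknowledged in your appeal to projectivity, is needed for the final topological step.
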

\begin{proof}
    Assume that $\phi_{\overline{p}}$ is \'etale for all maximal ideal $(p)\in \Spec R \setminus \{(0) \}$.
    Since $\phi_{\overline{p}}$ is \'etale, it is smooth, and so is $\phi_p$ by the definition of smoothness.
    By the finiteness of $\phi_{\overline{p}}$, it is \'etale.
    Since the \'etaleness is an open condition, $\phi$ is itself \'etale.
    
    The opposite implication is true because the base change of an \'etale morphism is \'etale.
\end{proof}

On the ramification and the singular locus, we have the following lemma.

\begin{lem}\label{lem: ramification and singular locus}
    Let notation be as above. The following statements hold.
    \begin{parts}
        \item \label{item: lem: smoothness of E F}
            The schemes $\calE$ and $\calF$ are smooth.
        
        \item \label{item: lem: singular locus of calX4}
            For all $(p) \in \Spec R \setminus \{ (2551) \}$, the scheme $(\calX_4)_{\overline{p}}$ is regular.
            The scheme $(\calX_4)_{\overline{2551}}$ is singular only at $[1,-308,13,-477,0] \in (\calX_4)_{\overline{2551}}(\overline{\bbF_{2551}})$.
            In particular, the singular point is outside $\pi_{\overline{p}}^{-1}(O) \cup \pi_{\overline{p}}^{-1}(3Q_0)$, where note that we abuse the notation for the $\bbF_p$-rational points $O = [0,1,0]$ and $3Q_0 = [0,1,1]$ over all primes $p \in \mathcal{P}$.

        \item \label{item: lem: X2551 is nodal}
            The singular point $[1,-308,13,-477,0] \in (\calX_4)_{\overline{2551}}$ is an ordinary double point.
        
        \item \label{item: lem: ramification locus and degree of pi}
            For all $(p)\in \Spec R$ and all closed point $x \in \pi_{\overline{p}}^{-1}(O) \cup \pi_{\overline{p}}^{-1}(3Q_0)$, the ramification degree of $\pi_{\overline{p}}$ at $x$ equals $2$.

        \item \label{item: lem: genus of X4 over p}
            The genus of $(\calX_4)_{p}$ is $5$ for all $(p)\in \Spec R \setminus\{(2551)\}$, and is $4$ for $(p)=(2551)$.
            
        \item\label{item: lem: integrality of X4' X4''}
            For all $(p)\in \Spec R$, the schemes $(\calX'_4)_p$ and $(\calX''_4)_p$ are integral.
            
        \item \label{item: lem: singular locus of calX4'}
            For all $(p)\in \Spec R$, the scheme $(\calX_4')_{\overline{p}}$ is regular at all closed point $x' \in \pi_{\overline{p}}'^{-1}(\calE_{\overline{p}}[2])$.
        
        \item \label{item: lem: ramification locus and degree of pi'}
            For all $(p)\in \Spec R$ and all closed point $x' \in \pi_{\overline{p}}'^{-1}(\calE_{\overline{p}}[2])$, the ramification degree of $\pi'_{\overline{p}}$ at $x'$ is $2$.

        \item \label{item: lem: ramification locus and degree of phi'}
            The morphism $\psi_{\overline{p}}$ is \'etale outside $\calE_{\overline{p}}[2]$ for all $(p)\in \Spec R$.
            For all closed point $\frakf \in \psi_{\overline{p}}^{-1}(\calE_{\overline{p}}[2])$, the ramification degree of $\psi_{\overline{p}}$ at $f$ equals $2$.        
    \end{parts}
\end{lem}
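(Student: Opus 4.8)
The plan is to prove every assertion through the two degree-two covers $\pi\colon \calX_4 \to \calE$ and $\psi\colon \calF \to \calE$ together with the map $\varphi = [2]$ or $T_{3Q_0}\circ[2]$, reducing each statement either to a fiberwise Jacobian computation in the explicit affine charts of \cref{lem: flatness of each schemes} or to a formal property of double covers. For \cref{item: lem: smoothness of E F}, I would apply the Jacobian criterion chart by chart. The curve $\calE$ is a Weierstrass model whose discriminant $-2^4\cdot 23$ is a unit in $R$, and $\calF$ is the superelliptic curve $T^4 = U^3 - U + 1$ whose defining polynomial has discriminant $-23$, again a unit; since $\gcd(4,3)=1$ there is a single boundary point $[1,0,0]$, at which the chart $\calF\cap D_+(U)$ gives $\partial/\partial S \neq 0$. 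Thus no affine point has simultaneously vanishing $T$-partial and $f'$, the boundary point is smooth, and both schemes are smooth over all of $\Spec R$.

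For \cref{item: lem: singular locus of calX4}--\cref{item: lem: X2551 is nodal}, the key point is that $\pi$ exhibits $\calX_4$ as the double cover $Z_4^2 = Z_1^2 + Z_3$ of $\calE$, branched exactly along $Z_4 = 0$; by \cref{lem: elliptic curve} \cref{item: lem: ramification locus} the branch divisor on $E$ is $O + 3Q_0$ together with the six points lying over the roots of $F$. The fiber $(\calX_4)_{\overline p}$ is singular precisely where two of these eight branch points collide, i.e. where the branch divisor fails to be reduced. Since $O$, $3Q_0$ and the six $F$-points remain mutually distinct away from the primes dividing the relevant resultants, and $\disc F = 23\cdot 2551$ by \cref{lem: elliptic curve} \cref{item: lem: discriminant} while $2$ and $23$ are invertible in $R$, the only bad prime in $\Spec R$ is $2551$. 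There the double root $c_0 = -477$ of $F$ merges two branch points, and a direct Jacobian computation in the chart $D_+(W)$ locates the unique singular point at $[1,-308,13,-477,0]$, which has $Z_4 = 0$ and lies over the collided value $c_0$; it differs from $P_9, P_{10}$ and therefore lies outside $\pi^{-1}(O)\cup\pi^{-1}(3Q_0)$. Computing the quadratic part of the two local equations at this point exhibits a tangent cone consisting of two distinct lines, so the singularity is an ordinary double point.

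The remaining parts follow from the structure of the covers. Part \cref{item: lem: ramification locus and degree of pi} is immediate: $\pi$ has degree $2$, so at any branch point the ramification degree can only be $2$, and $P_9, P_{10}$ (lying over $O, 3Q_0$) have $Z_4 = 0$, hence are genuine branch points regular on the fiber by \cref{item: lem: singular locus of calX4}. For \cref{item: lem: genus of X4 over p}, I would use that $\calX_4$ is a complete intersection of three quadrics in $\bbP^4$, so $\omega = \calO(1)$ and $\deg \calX_4 = 8$ give arithmetic genus $5$ on every fiber by flatness (\cref{lem: flatness of each schemes}); for $p \neq 2551$ the smooth fiber has geometric genus $5$, while the single ordinary double point at $p = 2551$ drops the geometric genus by its $\delta$-invariant $1$, giving $4$. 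For integrality \cref{item: lem: integrality of X4' X4''}, both $\pi'$ and $\pi''$ are degree-two covers of the integral curves $\calE$ and $\calF$; they are generically separable (as $2$ is invertible), hence reduced, and they are ramified, since the nonempty branch locus of $\pi$ pulls back nonempty under the surjections $\varphi$ and $\psi$, so the covers are connected and therefore integral.

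For \cref{item: lem: singular locus of calX4'} and \cref{item: lem: ramification locus and degree of pi'}, I would exploit the Cartesian square: since $\varphi$ sends $\calE[2]$ into $\{O, 3Q_0\}$, a point $x'$ over $\calE[2]$ has $\varphi'(x') \in \pi^{-1}(O)\cup\pi^{-1}(3Q_0)$, which is regular by \cref{item: lem: singular locus of calX4}; as $\varphi'$ is étale (being the base change of the étale isogeny $\varphi$), regularity transfers to $x'$, and the fiber of $\pi'$ over a point of $\calE[2]$ is the fiber of $\pi$ over $\{O, 3Q_0\}$, hence ramified of degree $2$ by \cref{item: lem: ramification locus and degree of pi}. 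Finally \cref{item: lem: ramification locus and degree of phi'} is the direct computation that $\psi$ is the map $v = T^2$, étale where $T\neq 0$ and totally ramified of degree $2$ over each of the three finite $2$-torsion points and over $O$. The main obstacle I anticipate is \cref{item: lem: singular locus of calX4}--\cref{item: lem: X2551 is nodal}: confirming in \textsc{Magma} that the eight branch points stay distinct modulo every prime of $R$ except $2551$, pinning down the unique singular point and verifying that it avoids $\pi^{-1}(O)\cup\pi^{-1}(3Q_0)$, and checking that its tangent cone splits into distinct lines so that the degeneration is an ordinary node rather than a cusp. Every downstream regularity and ramification statement rests on this avoidance, so it is the load-bearing computation.
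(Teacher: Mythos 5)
Most of your argument runs parallel to the paper's: smoothness of $\calE$ and $\calF$ via the Jacobian criterion and unit discriminants, the singular locus of $(\calX_4)_{\overline p}$ located through the degeneration of the branch divisor of the double cover $Z_4^2=Z_1^2+Z_3$ (the paper phrases this as ``$c_0$ is a multiple root of $f_{2,c}^{\circ 4}(0)$'' and invokes $\disc F = 23\cdot 2551$), the node verified by a non-degenerate Hessian, and parts \cref{item: lem: singular locus of calX4'}--\cref{item: lem: ramification locus and degree of phi'} deduced from \'etaleness of $\varphi'$ and the single-point fibers of $\pi$ over $O$ and $3Q_0$. Your genus computation is a genuinely different (and clean) route: adjunction for a complete intersection of three quadrics plus the $\delta$-invariant of the node, versus the paper's Riemann--Hurwitz count of $8$ (resp.\ $6$ after normalization) ramification points; both are fine.

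However, your proof of \cref{item: lem: integrality of X4' X4''} has a genuine gap. You argue ``reduced, ramified, hence connected, therefore integral,'' but connected and reduced does not imply irreducible for a degree-two cover: the scheme $V(y^2-x^2)\to \bbA^1$ is finite flat of degree $2$, reduced, ramified at the origin, connected, and reducible. Concretely, $\calX_4'=\calX_4\times_{\calE,\varphi}\calE$ is generically given by $w^2=\varphi^*h$ where $w^2=h$ cuts out $\calX_4$ over $\calE$, and nothing formal prevents $\varphi^*h$ from becoming a square in $K(E)$ (the class of $h$ in $K(E)^\times/(K(E)^\times)^2$ could die under pullback by the degree-$4$ isogeny); in that case $\calX_4'$ would be two copies of $\calE$ glued at the ramification points --- still connected, still ramified, not integral. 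This possibility is exactly what must be excluded, and the paper does so by a genus comparison: every irreducible component of $(\calX_4')_p$ dominates $(\calX_4)_p$ via the finite surjective $\varphi'_p$, hence has geometric genus at least $4$, whereas a component birational to $\calE_p$ (resp.\ $\calF_p$ for $\calX_4''$) would have genus $1$ (resp.\ $3$). You need to add this step, or an equivalent explicit verification that the relevant quadratic class survives pullback; without it the downstream descent through $\calX_4''$ is not justified.
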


\begin{proof}
\cref{item: lem: smoothness of E F}
    We can check the regularity of $\calE$ and $\calF$ at the closed point by the Jacobian criterion.
    Since $\calE$ and $\calF$ are flat over $R$, they are regular at all points.

\cref{item: lem: singular locus of calX4}
    Since $\calE$ is smooth, the curve $(\calX_4)_{\overline{p}}$ is smooth at all points other than the points at which $\pi_{\overline{p}}$ ramifies.
    Ramification points of $\pi_{\overline{p}}$ is given as in \cref{lem: elliptic curve} \cref{item: lem: ramification locus} over all $p\in \calP$.
    In particular, these points are in the open subset $(W\neq 0)$.
    This open subset is isomorphic to the affine scheme $\Ypre(2,4,0)_{\overline{p}} = V(f_{2,c}^{\circ 4}(x)) \subset (\bbA^2_{x,c})_{\overline{p}}$.
    By the Jacobian criterion, a point $(0,c_0)$ is a singular point of $\Ypre(2,4,0)_{\overline{p}}$ if and only if $c_0$ is a multiple root of $f_{2,c}^{\circ 4}(0)$.
    Such $c_0$ exists only over $2551$ by \cref{lem: elliptic curve} \cref{item: lem: discriminant}.
    Moreover, such $c_0$ for $p=2551$ is $-477$.
    The point $(x,c) = (0,-477)$ corresponds to $[1,-308,13, -477,0] \in (\calX_4)_{\overline{2551}}(\overline{\bbF_{2551}})$.

\cref{item: lem: X2551 is nodal}
    The direct calculation shows that the Hessian of $f_{2,c}^{\circ{4}}(x)$ at $(x,c) = (0, -477)$, is non-degenerate.
    Hence, it is an ordinary double point of $\Ypre(2,4,0)_{\overline{2551}}$ (see \cite[{Tag 0C49}]{stacks-project}).

\cref{item: lem: ramification locus and degree of pi}
    Since the sets $\pi_{\overline{p}}^{-1}(O)$ and $\pi_{\overline{p}}^{-1}(3Q_0)$ consists of only one smooth point $P_9 = [1,0,0,0,0]$ and $P_{10} = [1,-1,0,-1,0]$, respectively, and the degree of $\pi_{\overline{p}}$ is $2$, the statement holds.

\cref{item: lem: genus of X4 over p}
    For $p \in \calP \setminus\{2551\}$, the curve $(\calX_4)_{\overline{p}}$ is smooth, and $\pi_{\overline{p}}$ ramifies at $8$ points as similar to \cref{lem: elliptic curve} \cref{item: lem: ramification locus}.
    Hence, the Riemann-Hurwitz formula shows that the genus of $\calX_{\overline{p}}$ is $5$.
    
    Let $\nu \colon \widetilde{\calX}\longrightarrow (\calX_4)_{\overline{2551}}$ be the normalization of $(\calX_4)_{\overline{2551}}$.
    It is enough to show that the genus of $\widetilde{\calX}$ is $4$.
    Since the singular point of $(\calX_4)_{\overline{2551}}$ is a single ordinary double point by \cref{item: lem: X2551 is nodal}, its preimage by $\nu$ consists of two points.
    Hence, the morphism $\nu_{\overline{2551}} \circ \pi$ ramifies at $6$ points, and is of degree $2$.
    The Riemann-Hurwitz shows that the genus of $\widetilde{\calX}$ is $4$.

\cref{item: lem: integrality of X4' X4''}
    Since the extensions of function fields induced by $\varphi_p$ and $\varphi_p \circ \psi_p$ are separable extensions, the function fields of $(\calX_4')_p$ and $(\calX_4'')_p$ are reduced.
    
    Assume that $(\calX_4'')_p$ (resp. $(\calX_4')_p$) is not irreducible, and let $\calC$ be an irreducible component.
    Since the morphism $\pi''$ (resp. $\pi'$) is of degree $2$, and its restriction to each irreducible component is dominant, $\calC$ is birational to $\calF_{p}$ (resp. $(\calE)_p$).
    Hence, the genus of $\calC$ is $3$ (resp. $1$).
    However, since the restriction of the morphism $\varphi'_p\circ \psi'_p$ (resp. $\varphi'_p$) to $\calC$ is dominant, the genus of $\calC$ is more than or equal to that of $(\calX_4)_p$, which is $4$ or $5$ by \cref{item: lem: genus of X4 over p}.
    This is a contradiction.

\cref{item: lem: singular locus of calX4'}
    Since $\varphi$ is \'etale, the morphism $\varphi_{\overline{p}}$ is \'etale, in particular smooth.
    Since $\varphi_{\overline{p}}(x')$ is a regular point of $(\calX_4)_{\overline{p}}$ by \cref{item: lem: singular locus of calX4}, the assertion holds (see \cite[Chapter 4, Theorem 3.36]{Liu02}).

\cref{item: lem: ramification locus and degree of pi'}
    Let $x'$ be an element of the fiber $\pi_{\overline{p}}'^{-1}(e)$ for some $e\in \calE_{\overline{p}}[2](\overline{\kappa(p)})$. Note that $x'$ is a regular point by \cref{item: lem: singular locus of calX4'}.
    The set $\pi_{\overline{p}}^{-1}(\varphi_{\overline{p}}(e))$ consists of a single regular point, either $P_9$ or $P_{10}$.
    Since the scheme $\calX_{\overline{p}}$ is defined as the fiber product, the point $x'$ is the unique point of $\pi_{\overline{p}}'^{-1}(e)$.
    Therefore, the ramification degree of $\pi_{\overline{p}}'$ at $x'$ equals the degree of $\pi_{\overline{p}}'$, which is $2$.

\cref{item: lem: ramification locus and degree of phi'}
    For $e\in (\calE_{\overline{p}}\setminus \calE_{\overline{p}}[2])(\overline{\bbF_{p}})$, the fiber $\psi_{\overline{p}}'^{-1}(e)$ consists of two regular points.
    For $e\in \calE_{\overline{p}}[2]$, the fiber $\psi_{\overline{p}}'^{-1}(e)$ consists of a single regular point.
    Since the degree of $\psi_{\overline{p}}'$ is $2$, the assertion holds.
\end{proof}

\begin{prop}\label{prop: tildepsi' is etale}
    Let notation be as above.
    Let $\widetilde{\calX_4}''$ be 
    the partial normalization of $\calX_4''$ at $(\pi'\circ \psi')^{-1}(\calE[2])$, i.e., the scheme given by glueing $\calX_4''\setminus (\pi'\circ \psi')^{-1}(\calE[2])$ and the normalization of $\calX_4'' \setminus \left(\Sing(\calX_4'')\setminus(\pi'\circ \psi')^{-1}(\calE[2])\right)$.
    Let $\nu \colon \widetilde{\calX_4}'' \longrightarrow \calX_4''$ be the canonical morphism.
    Then, the morphism $\psi' \circ \nu \colon \widetilde{\calX_4}'' \longrightarrow \calX_4'$ is finite \'etale of degree $2$.
\end{prop}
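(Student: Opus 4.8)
The plan is to reduce to the geometric fibers over each maximal ideal $(p)\in\Spec R$ by means of \cref{lem: criterion of etaleness}, and then to carry out a local analysis of the fibered product $\calX_4''$ along the locus lying over $\calE[2]$. First I would record the structural facts: $\psi'$ is the base change of $\psi\colon\calF\to\calE$ along $\pi'$, and $\psi$ is a finite degree-$2$ morphism of smooth curves which is étale outside $\calE[2]$ with ramification degree exactly $2$ over $\calE[2]$, by \cref{lem: ramification and singular locus} \cref{item: lem: smoothness of E F} and \cref{item: lem: ramification locus and degree of phi'}. Since $\nu$ is a finite birational partial normalization of the integral scheme $\calX_4''$ (\cref{lem: ramification and singular locus} \cref{item: lem: integrality of X4' X4''}), the composite $\psi'\circ\nu$ is finite and generically of degree $2$; it remains to prove étaleness, which I would split according to the complement of $(\pi'\circ\psi')^{-1}(\calE[2])$ and this locus itself.

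Off $(\pi'\circ\psi')^{-1}(\calE[2])$ the morphism $\psi'$ is the base change of the étale locus of $\psi$, and $\nu$ is an isomorphism there by construction, so $\psi'\circ\nu$ is already étale. The one point to check is that the extra ordinary double point of $\calX_4$ over $2551$ from \cref{lem: ramification and singular locus} \cref{item: lem: singular locus of calX4} and \cref{item: lem: X2551 is nodal} does not interfere. Since $\varphi$ is $[2]$ or $T_{3Q_0}\circ[2]$, it sends $\calE[2]$ into $\{O,3Q_0\}$, whence $\varphi'\bigl((\pi')^{-1}(\calE[2])\bigr)\subseteq\pi^{-1}(O)\cup\pi^{-1}(3Q_0)$, while \cref{item: lem: singular locus of calX4} places the bad point outside $\pi^{-1}(O)\cup\pi^{-1}(3Q_0)$. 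As étaleness is insensitive to singularities of the target, this pulled-back double point lies off the relevant locus and creates no obstruction.

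The core is the local picture over a point $e\in\calE_{\overline{p}}[2]$. By \cref{lem: ramification and singular locus} \cref{item: lem: singular locus of calX4'} and \cref{item: lem: ramification locus and degree of pi'}, the fiber $(\pi')^{-1}(e)$ is a single regular point $x'$ at which $\pi'$ ramifies to degree exactly $2$, and by \cref{item: lem: ramification locus and degree of phi'} the same holds for $\psi$ above $e$. In étale-local coordinates $s$ at $e$, $w$ at $x'$, and $t$ above $e$ on $\calF$, the two covers read $s=w^2$ and $s=t^2$ up to units, so near the corresponding point $(\calX_4'')_{\overline{p}}$ is cut out by $w^2=t^2$. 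Because $2$ is invertible in $R$, this factors as $(w-t)(w+t)$ after adjusting by a square unit, giving an ordinary double point with branches $t=\pm w$. Partial normalization separates the branches, and on each the projection $(w,t)\mapsto w$ restricts to the isomorphism $t\mapsto\pm t$ onto a neighborhood of the regular point $x'$. Hence $(\psi'\circ\nu)_{\overline{p}}$ has two reduced points over $x'$, each mapping isomorphically, so locally it is finite locally free of rank $2$ and unramified, i.e.\ finite étale of degree $2$.

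Assembling the two regions shows $(\psi'\circ\nu)_{\overline{p}}$ is finite étale of degree $2$ for every maximal $(p)$; the local finite-locally-free description also yields flatness of $\psi'\circ\nu$, so \cref{lem: criterion of etaleness} upgrades this to the assertion over $R$. I expect the hard part to be exactly the local computation at $e\in\calE[2]$: one must confirm that the fibered product of the two ramified double covers is \emph{precisely} nodal --- neither more singular nor reducible --- and that normalizing just those branches restores unramifiedness while preserving the finite-locally-free structure. This is where the hypotheses that $2$ is invertible in $R$ and that the ramification degree is exactly $2$ are indispensable, and where the placement of the $2551$-double point off the $\calE[2]$-locus must be invoked.
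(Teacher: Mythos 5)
Your proposal is correct, and it follows the same overall decomposition as the paper (étale away from $(\pi'\circ\psi')^{-1}(\calE[2])$ because $\nu$ is an isomorphism there and $\psi'$ is a base change of the étale locus of $\psi$; the real work is over $\calE[2]$), but the key local step is executed differently. The paper works at the codimension-one generic points of the $\calE[2]$-locus on the arithmetic surfaces: the local rings $\calO_{\calE,\eta}$, $\calO_{\calX_4',\xi}$, $\calO_{\calF,\eta'}$ are DVRs, both extensions have ramification index $2$, so Abhyankar's lemma gives unramifiedness of the normalized tensor product in codimension one, and purity of the branch locus (legitimate because $\calX_4'$ is regular along $\pi'^{-1}(\calE[2])$) then upgrades this to étaleness on the whole locus. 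You instead pass to geometric fibers via \cref{lem: criterion of etaleness} and compute completed local rings: the two tame double covers read $s=w^2$ and $s=t^2$ after absorbing square units, so the fiber product is the node $w^2=t^2$, and partial normalization separates the branches $t=\pm w$, each mapping isomorphically to a neighborhood of $x'$. The two arguments are equivalent in substance --- your explicit node computation is exactly the content of Abhyankar's lemma in this tame degree-$2$ situation --- but yours buys a completely concrete picture (and makes visible why $2\in R^\times$ and the exact ramification index matter) at the cost of having to verify the hypotheses of the fiberwise criterion, while the paper's version avoids any pointwise analysis by invoking purity. Your handling of the $2551$ node (it lies outside $\pi^{-1}(O)\cup\pi^{-1}(3Q_0)$, hence off the normalized locus, and étaleness is preserved under base change regardless of singularities) matches the role of \cref{lem: ramification and singular locus} \cref{item: lem: singular locus of calX4'} in the paper. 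The only point you share with the paper in glossing over is the a priori flatness and finiteness of $\psi'\circ\nu$ needed to invoke \cref{lem: criterion of etaleness}; this is harmless but worth a sentence (finiteness of $\nu$ and of the base-changed $\psi'$, plus miracle flatness near the regular locus of $\calX_4'$).
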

\begin{proof}
    Only the \'etaleness is non-trivial.
    By \cref{lem: criterion of etaleness}, it suffices to check \'etaleness fiberwise.
    Let $\eta, \xi$, and $\eta'$ be the generic points of $\calE[2], \pi'^{-1}(\calE[2])$, and $\psi^{-1}(\calE[2])$, respectively.
    Then, the rings
    \begin{align*}
        A \coloneqq \calO_{\calE,\eta}, \quad
        B \coloneqq \calO_{\calX_4',\xi}, \text{ and} \quad 
        A_1 \coloneqq \calO_{\calF,\eta'}
    \end{align*}
    are DVR. Moreover, since the ramification degrees of the extensions $A_1/ A$ and $B/A$ are both equal to $2$.
    Hence, letting $B_1$ be the integral closure of $B\otimes_A A_1$ in the fraction field at a generic point of $(\pi' \circ \psi' \circ \nu)^{-1}(\calE[2])$, we can see that $B_1/ B$ is unramified by the Abhyankar's Lemma \cite[Tag 0BRM]{stacks-project}.
    Hence, the morphism $\psi'\circ \nu$ is \'etale on $(\pi'\circ \psi' \circ \nu)^{-1}(\calE[2])$ by the purity of the branch locus (see \cite[Tag 0BMB]{stacks-project}).
    
    Let $U$ be the open subscheme $\calX_4'' \setminus (\pi' \circ \psi')^{-1}(\calE[2])$.
    Then, restriction of $\nu$ to $\nu^{-1}(U)$ is isomorphism.
    Since $\psi'$ is \'etale on $U$, the morphism $\psi'\circ \nu$ is \'etale on $\nu^{-1}(U)$.
    Consequently, $\psi' \circ \nu$ is \'etale on the whole space $\widetilde{\calX_4}''$.
    \end{proof}


\subsection{Reduction to curves $C_D$ of lower genus}\label{subsec: reduction to CD}
Let $X_4 = \Xpre(2,4,0)$, $X_4'$, $X_4''$, $\widetilde{X_4}''$, $F$ be the generic fiber of $\calX_4$, $\calX_4'$, $\calX_4''$, $\widetilde{\calX_4}''$, and $F$ respectively.
Then, we have the diagram
\begin{equation*}
  \begin{CD}
     X_4 @<{\varphi'}<< X_4' @<{\psi'}<< X_4'' @<{\nu}<< \widetilde{X_4}''\\
        @V{\pi}VV        @V{\pi'}VV        @VV{\pi''}V\\
     E   @<{\varphi}<<     E   @<{\psi}<< F
  \end{CD}
\end{equation*}
commutes.
For a rational point $x \in X_4(\Q)$, the point $\pi(x)$ is in the image $\varphi(E(\Q))$ for either $\varphi = [2]$ or $T_{3Q_0}\circ [2]$ since the Mordell-Weil group $E(\Q)$ is generated by $Q_0$ by \cref{lem: elliptic curve}\cref{item: lem: MWG of E}.
Thus, it lifts to $x' \in X_4'(\Q)$.
By the projectivity of $\calX_4'$, the point $x'$ lifts to a section $x' \in \calX_4'(\Spec R)$, where $R = \Z[\frac{1}{2}, \frac{1}{23}]$.
Taking the fiber product, we get the following diagram:

\begin{center}
\begin{tikzcd}\label{eq:diagram2}
	{\Spec R} & {\Spec R} & & {\Spec R'} \\
	{\mathcal{X}_4} & {\mathcal{X}_4'} & {\mathcal{X}_4''} & {\widetilde{\mathcal{X}_4}''} \\
	{\mathcal{E}} & {\mathcal{E}} & {\mathcal{F}}
	\arrow["x"', from=1-1, to=2-1]
	\arrow[from=1-2, to=1-1]
	\arrow["{x'}"', from=1-2, to=2-2]
	\arrow["\varpi"',from=1-4, to=1-2]
	\arrow["\lrcorner"{anchor=center, pos=0.125, rotate=-90}, draw=none, from=1-4, to=2-2]
	\arrow["{\widetilde{x}''}"', from=1-4, to=2-4]
	\arrow["\pi"', from=2-1, to=3-1]
	\arrow["{\varphi'}"', from=2-2, to=2-1]
	\arrow["\lrcorner"{anchor=center, pos=0.125, rotate=-90}, draw=none, from=2-2, to=3-1]
	\arrow["{\pi'}"', from=2-2, to=3-2]
	\arrow["{\psi'}"', from=2-3, to=2-2]
	\arrow["\lrcorner"{anchor=center, pos=0.125, rotate=-90}, draw=none, from=2-3, to=3-2]
	\arrow["{\pi''}"', from=2-3, to=3-3]
	\arrow["\nu"', from=2-4, to=2-3]
	\arrow["\varphi"', from=3-2, to=3-1]
	\arrow["\psi"', from=3-3, to=3-2]
\end{tikzcd}
\end{center}
By \cref{prop: tildepsi' is etale}, the morphism $\psi'\circ \nu$ is finite \'etale of degree $2$.
Hence, the morphism $\varpi$ is finite \'etale of degree $2$.
The following proposition lets us write $R'$ down explicitly.

\begin{prop}\label{prop: classification of etale algebra}
    Let $R= \Z[1/p_i: 1 \leq i \leq n]$ with distinct prime numbers $p_1 = 2, p_2,\ldots, p_n$.
    For $D = (-1)^{\varepsilon_0}\prod_{1\leq i \leq n}p_i^{\varepsilon_i}$ with some $\varepsilon_i \in \{0,1\}$ $(0\leq i \leq n)$, let
    $R_D \coloneqq R[x]/(x^2 - D)$.
    Then, $R_D$ is a finite \'etale $R$-algebra of degree $2$.
Moreover, any finite \'etale $R$-algebra of degree $2$ is given in this way.
\end{prop}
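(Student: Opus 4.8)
The plan is to prove the two assertions separately: the first by a direct discriminant computation, and the "moreover" by reducing an arbitrary degree $2$ \'etale $R$-algebra to a diagonal form $R[x]/(x^2-d)$ with $d\in R^\times$, and then matching such forms with the group $R^\times/(R^\times)^2$. This is the elementary incarnation of the Kummer-theoretic statement that degree $2$ \'etale algebras over a ring in which $2$ is invertible and whose Picard group is trivial are classified by $R^\times/(R^\times)^2$; since $R$ is a localization of $\Z$ and hence a principal ideal domain with $\mathrm{Pic}(R)=0$ and $2=p_1\in R^\times$, I would prefer to give the self-contained argument rather than invoke $H^1(\Spec R,\mu_2)$.

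First I would dispatch the easy direction. Each $p_i$ and $-1$ is a unit of $R$, so $D=(-1)^{\varepsilon_0}\prod_i p_i^{\varepsilon_i}\in R^\times$; as $4=2^2$ is a unit as well, the discriminant $4D$ of $x^2-D$ is a unit in $R$, so $R_D=R[x]/(x^2-D)$ is \'etale over $R$. It is finite free of rank $2$ as an $R$-module, hence a finite \'etale $R$-algebra of degree $2$.

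For the converse, let $S$ be any finite \'etale $R$-algebra of degree $2$. Since $R$ is a principal ideal domain, the finite projective module $S$ is free; writing $S=R\cdot 1\oplus R\cdot\alpha$, the generator satisfies a monic relation $\alpha^2=t\alpha-m$ with $t,m\in R$. Because $2\in R^\times$, the element $\beta\coloneqq 2\alpha-t$ is again part of a basis (the change of basis from $\{1,\alpha\}$ has determinant $2$), and $\beta^2=t^2-4m=:d\in R$, so $S\cong R[x]/(x^2-d)$. \'Etaleness forces $4d$, hence $d$, to be a unit. Thus every such $S$ has the diagonal shape $R[x]/(x^2-d)$ with $d\in R^\times$. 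I would then check that $R[x]/(x^2-d)\cong R[y]/(y^2-d')$ as $R$-algebras if and only if $d/d'\in(R^\times)^2$: an isomorphism carries $x$ to some $a+by$ with $b\in R^\times$ (it sends a basis to a basis), and comparing the $y$-coefficient of $(a+by)^2=d$ gives $2ab=0$, whence $a=0$ and $d=b^2d'$ using $2\in R^\times$; conversely $x\mapsto y/u$ realizes the isomorphism when $d'=u^2d$. Finally the unit group is $R^\times=\{\pm1\}\times\langle p_1\rangle\times\cdots\times\langle p_n\rangle\cong \Z/2\Z\times\Z^n$, so $R^\times/(R^\times)^2\cong(\Z/2\Z)^{n+1}$, with each class represented exactly once by $D=(-1)^{\varepsilon_0}\prod_i p_i^{\varepsilon_i}$; combining the three steps shows $D\mapsto R_D$ is a bijection from these representatives onto the isomorphism classes, which is the claim.

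The hard part will be the middle step: securing freeness of $S$ from $R$ being a PID, extracting the diagonal generator $\beta$, and establishing the squareness criterion for isomorphism — all of which hinge on $2$ being a unit in $R$. The discriminant bookkeeping and the unit-group computation are routine.
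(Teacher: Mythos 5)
Your proposal is correct, but it takes a genuinely different route from the paper. The paper's proof is cohomological: it observes that a degree~$2$ finite \'etale covering of $\Spec R$ is a $\mu_2$-torsor, invokes the Kummer sequence $1 \to \mu_2 \to \bbG_m \to \bbG_m \to 1$ of \'etale sheaves (using $2 \in R^\times$), identifies $H^1_{\text{\'et}}(\Spec R, \bbG_m) \cong \Pic(R) = 0$ since $R$ is a PID, concludes that $R^\times/(R^\times)^2$ classifies such coverings, and then finishes by counting: there are $2^{n+1}$ classes and the $R_D$ are pairwise non-isomorphic (a fact the paper asserts rather than proves), so they exhaust the classes. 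You instead diagonalize an arbitrary degree~$2$ \'etale algebra by hand --- completing the square using $2 \in R^\times$, reading off that the resulting $d$ is a unit from the discriminant, and proving directly that $R[x]/(x^2-d) \cong R[y]/(y^2-d')$ if and only if $d/d' \in (R^\times)^2$ --- and then compute $R^\times/(R^\times)^2 \cong (\Z/2\Z)^{n+1}$ explicitly. Your approach is self-contained, avoids \'etale cohomology entirely, and actually supplies the non-isomorphism argument that the paper leaves implicit; the paper's approach is shorter and generalizes immediately to bases with nontrivial Picard group or to $\mu_n$-torsors for $n>2$. The one step you should tighten is the claim that $1$ extends to an $R$-basis of $S$: freeness of $S$ as an $R$-module alone does not give this (the quotient $S/R\cdot 1$ could a priori have torsion). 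It does hold here because the normalized trace $\tfrac{1}{2}\operatorname{tr}_{S/R}$ is an $R$-linear retraction of $R \hookrightarrow S$ (using $\operatorname{tr}(1)=2 \in R^\times$), so $S = R \oplus \ker(\tfrac{1}{2}\operatorname{tr})$ with the second summand free of rank $1$ over the PID $R$; this even hands you a trace-zero generator $\beta$ with $\beta^2 \in R$ directly, shortcutting the completion of the square. With that point made explicit, the argument is complete.
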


\begin{proof}
At first, it is easy to see that the $R$-algebra $R_D$ in the statement is finite \'etale of degree $2$ over $R$ since $2$ is invertible in $R$.

Any finite \'etale covering of $\Spec R$ is automatically a Galois \'etale covering of $\Spec R$ with Galois group $\Z/2\Z$, thus, a $\mu_2$-torsor over $\Spec R$.
The group $H^1_{\text{\'et}}(\Spec R, \mu_2)$ classifies $\mu_2$-torsors over $\Spec R$ up to ismorphisms.
We have the following exact sequence of \'etale sheaves on $\Spec R$:
\[
    1 \longrightarrow \mu_2 \longrightarrow \bbG_{m} \overset{(\cdot)^2}{\longrightarrow} \bbG_m \longrightarrow 1
\]
since $2$ is invertible in $R$.
Taking the long exact sequence, we obtain the exact sequence
\[
    R^{\times} \overset{(\cdot)^2}{\longrightarrow} R^{\times} \longrightarrow H^1_{\text{\'et}}(\Spec R, \mu_2) \longrightarrow H^1_{\text{\'et}}(\Spec R, \bbG_m).
\]
Here, we note that $H^1_{\text{\'et}}(\Spec R, \bbG_m) \cong \Pic(R)$ by \cite[Proposition $6.6.1$]{Poo17}, and it is a trivial group since $R$ is a principal ideal domain.
Therefore, the group $R^{\times}/(R^{\times})^{2}$ classifies finite \'etale coverings of $\Spec R$ of degree $2$.
Each $[D] \in R^{\times}/(R^{\times})^{2}$ is represented by $(-1)^{\varepsilon_0}\prod_{1\leq i \leq n}p_i^{\varepsilon_i}$ for some $\varepsilon_i \in \{0,1\}\ (0\leq i \leq n)$.
Finally, since $R_D$'s for distinct $[D]$ are not isomorphic to each other, we obtain $2^{n+1}(=\# R^{\times}/(R^\times)^2)$ isomorphic classes of finite \'etale $R$-algebras of degree $2$.
Hence, these $R_D$'s represent all the isomorphic classes.
\end{proof}

Now, let $D$ be as in \cref{prop: classification of etale algebra} such that $R' \cong R_D$.
If $D = 1$, taking the generic fiber, we conclude that $\mathfrak{f} \coloneq \pi''\circ \widetilde{\nu}''\left(\widetilde{x}''\right)$ is an element of $F(\Q\times \Q) = F(\Q) \times F(\Q)$ such that $\psi(\mathfrak{f})$ is in $E(\Q)$, which is equivalent to that $\psi(\mathfrak{f})$ is in the diagonal of $E(\Q\times \Q) = E(\Q) \times E(\Q)$.

If $D \neq 1$, taking the generic fiber, we conclude that $\mathfrak{f} \coloneq \pi''\circ \widetilde{\nu}''\left(\widetilde{x}''\right)$ is an element of $F(\Q(\sqrt{D}))$ such that $\psi(\mathfrak{f})$ is in $E(\Q)$.
If $\mathfrak{f}$ is a point at infinity, it equals $[1:0:0]$.
Its image in $E$ under $[2]\circ \psi$ and $T_{3Q_0}\circ [2]\circ \psi$ are $O$ and $3Q_0$, respectively.
Assume that $\mathfrak{f}$ is in the affine part of $F$.
Letting $\mathfrak{f} = (u,t')$, $\psi(\mathfrak{f})$ is in $E(\Q)$ if and only if $u$ and $t'^2$ are rational numbers.
Thus, $t'$ is a multiple of a rational number and $\sqrt{D}$, or is itself a rational number.
If $t'$ is a rational number, then $(u,t')$ is a rational point of $F = C_1$.
If $t'$ is a multiple of a rational number $t$ and $\sqrt{D}$, then $(u,t)$ is a $\Q$-rational point of the curve
\[
    C_D : D^2 t^4 = u^3 - u + 1.
\]
Let $\psi_D \colon C_D \longrightarrow E; (u,t) \mapsto (u,Dt^2)$.

Consequently, for any rational point $x\in X_4(\Q)$, the point $\pi(x)$ equals $\varphi \circ \psi_D(u,t)$ for some $D = (-1)^{\varepsilon_0} 2^{\varepsilon_1} 23^{\varepsilon_2}$, $(u,t) \in C_D(\Q)$, and $\varphi = [2]$ or $T_{3Q_0}\circ [2]$.
Hence, if we obtain the complete list of the rational points of $C_D$, we can determine all the rational points of $X_4$.


\subsection{Rational points of $C_D$}\label{subsec: rational points of CD}

This subsection aims to prove \cref{thm: rational points of CD}.
Let $K$, $\calO_K$, $\theta$, $\pe_1 = \beta_1 \calO_K$, and $\pe_2 = \beta_2 \calO_K$ be as in \cref{lem: number field}.
This section focuses on the determination of the rational points of the affine plane curves
\[
    C_D \colon D^2 y^4 = x^3 - x + 1
\]
for $D = (-1)^{\varepsilon_0} 2^{\varepsilon_1} 23^{\varepsilon_2}$ with $\varepsilon_i \in \{0,1\}$ $(0\leq i \leq 2)$.
Here, note that we change the notation of variables from the previous subsections to save symbols.

\begin{proofwithoutqed}[Proof of \cref{thm: rational points of CD}]
For $(x,y) \in C_D(\Q)$, since we have
\[
    (D^2 y)^4 = (D^2 x)^3 - D^4 (D^2 x) + D^6,
\]
there are integers $X$, $Y$, and $Z > 0$ with
\[
    D^2y = Y/Z^3,\ D^2 x = X/Z^4, \text{ and } (X, Y) = (X, Z) = 1.
\]
The equation
\begin{equation} \label{eq:eliminatingdenominators}
    Y^4 = X^3 - D^4 XZ^8 + D^6 Z^{12}
\end{equation}
holds. Put
\[
    A \coloneqq X - D^2\theta Z^4, \quad
    B \coloneqq X^2 + D^2\theta X Z^4 + D^4 (\theta^2 - 1)Z^8,
\]
and write
\[
    A = D_A s_A^4, \quad
    B = D_B s_B^4
\]
with $D_A, D_B, s_A, s_B \in \calO_K \setminus\{0\}$ such that
$D_A$ and $D_B$ are fourth power free, and $D_A D_B$ is in $(K^\times)^4$.
Then, we have the following lemma.
\end{proofwithoutqed}

\begin{lem} \label{lem: determination of deltaA and deltaB}
    Let notation be as above.
    \begin{parts}
        \item \label{item: lem: prime ideals dividing delta}
            If a maximal ideal $\pe$ of $\calO_K$ divides $D_A$ or $D_B$,
            it divides all $D_A$, $D_B$, $D$ and $X$.
        \item \label{item: lem: 2 does not divide deltaA deltaB}
            $2\calO_K$ does not divide $D_A$ nor $D_B$.
        \item \label{item: lem: pe1 does not divide deltaA deltaB}
            $\pe_1$ does not divide $D_A$ nor $D_B$.
        \item \label{item: lem: pe2 does not divide deltaA deltaB}
            $\pe_2$ does not divide $D_A$ nor $D_B$.
        \item \label{item: lem: candidates of deltaA deltaB}
            We have $(D_A,D_B) = (1,1)$, $(-\theta, -\theta^3)$, $(\theta^2, \theta^2)$, or $(-\theta^3, -\theta)$
    \end{parts}
\end{lem}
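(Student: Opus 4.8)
The plan is to exploit the factorization $AB = Y^4$—which follows from $x^3 - x + 1 = (x-\theta)(x^2+\theta x + \theta^2 - 1)$ and \cref{eq:eliminatingdenominators}—together with the arithmetic of $\calO_K$ from \cref{lem: number field}. For \cref{item: lem: prime ideals dividing delta}, I would first observe that substituting $X \equiv D^2\theta Z^4 \pmod{A}$ into $B$ and using $3\theta^2 - 1 = \beta_2$ gives $B \equiv D^4 Z^8\beta_2 \pmod{A}$, so $\gcd(A,B) \mid D^4 Z^8\beta_2$. Next, if a prime $\pe$ divides $D_A$, then fourth-power-freeness forces $v_\pe(A)\not\equiv 0\pmod 4$, while $v_\pe(A)+v_\pe(B)=4v_\pe(Y)\equiv 0 \pmod 4$; hence $v_\pe(B)\not\equiv 0\pmod 4$ too, so $\pe\mid D_B$ and $\pe\mid \gcd(A,B)\mid D^4Z^8\beta_2$. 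Ruling out $\pe\mid Z$ (which would force $\pe\mid X$, against $(X,Z)=1$), I get $\pe\mid D^4\beta_2$; and since $\beta_2$ generates $\pe_2$, any $\pe\ne\pe_2$ must divide $D$, whence $\pe\mid X$ because $\pe\mid A=X-D^2\theta Z^4$. The remaining case $\pe=\pe_2$ is subsumed by \cref{item: lem: pe2 does not divide deltaA deltaB} below.

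For \cref{item: lem: 2 does not divide deltaA deltaB} and \cref{item: lem: pe1 does not divide deltaA deltaB} I would feed this into \cref{eq:eliminatingdenominators}. Note $2\calO_K$ is prime ($t^3-t+1$ is irreducible modulo $2$), and $\pe_1\ne\pe_2$, so neither divides $\beta_2$; thus if either divided $D_A$, part \cref{item: lem: prime ideals dividing delta} would give $\pe\mid D$ and $\pe\mid X$, i.e. the rational prime $p\in\{2,23\}$ beneath $\pe$ divides both $D$ and $X$. Reducing $Y^4=X^3-D^4XZ^8+D^6Z^{12}$ modulo $p$ then yields $p\mid Y$, contradicting $(X,Y)=1$.

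The main obstacle is \cref{item: lem: pe2 does not divide deltaA deltaB}, precisely because $\pe_2=(\beta_2)$, so the implication "$\pe\mid D_A\Rightarrow\pe\mid D$" of part \cref{item: lem: prime ideals dividing delta} breaks down there. When $23\mid D$ the argument of the previous paragraph still applies. When $23\nmid D$ I would argue $\pe_2$-adically, using that $23$ ramifies with $e(\pe_2/23)=2$ and that $\theta$ reduces to the double root $\rho$ of $t^3-t+1$ modulo $23$. If $\pe_2\mid A$ then $x\equiv\rho\pmod{23}$, and over the ramified quadratic completion of $K$ at $\pe_2$ one has $A=(DZ^2)^2(x-\theta)$ and $B=(D^2Z^4)^2\prod(x-\theta_i)$ over the two remaining roots $\theta_i$. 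Because $x-\rho$ lies in $\Q_{23}$ its $\pe_2$-valuation is even, while $x$ minus each of the two roots congruent to $\rho$ has $\pe_2$-valuation exactly $1$ and $x$ minus the root congruent to the simple root has valuation $0$ (the exact ramification, reflected in the field discriminant $-23$ having $23$-valuation $1$, is what pins these down). Hence $v_{\pe_2}(A)=v_{\pe_2}(B)=1$, so $v_{\pe_2}(Y^4)=v_{\pe_2}(AB)=2\not\equiv 0\pmod 4$, a contradiction; thus $\pe_2\nmid A$ and $\pe_2\nmid D_A$. Getting these local valuations at the ramified prime exactly right is the delicate point.

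By \cref{item: lem: prime ideals dividing delta}--\cref{item: lem: pe2 does not divide deltaA deltaB}, any prime dividing $D_A$ or $D_B$ would divide $D$ and hence be $2\calO_K$, $\pe_1$, or $\pe_2$—all excluded—so $D_A,D_B$ are units, lying in $\calO_K^\times=\langle-1\rangle\times\langle\theta\rangle$ and determined modulo $(\calO_K^\times)^4=\langle\theta^4\rangle$. For \cref{item: lem: candidates of deltaA deltaB} I would impose two constraints. The normalization $D_AD_B\in(K^\times)^4$ forces equal signs and $D_B\equiv D_A^{-1}\pmod{(\calO_K^\times)^4}$. To cut the eight unit classes down, I would apply the map $(x-T)$: since $A=(DZ^2)^2(x-\theta)$ and $(x,Dy^2)\in E(\Q)$, \cref{lem: elliptic curve}\cref{item: lem: x-T map} gives $(x-\theta)\in\{1,-\theta\}$ in $K^\times/(K^\times)^2$, so $D_A\equiv A\equiv 1$ or $-\theta$ modulo squares. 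Among $\pm\theta^a$ $(0\le a\le 3)$ only $1,-\theta,\theta^2,-\theta^3$ have square class in $\{1,-\theta\}$ (using that $K$ has a real place, so $-1$ is a nonsquare, and that $-\theta$ is a nontrivial square class), and pairing each with the corresponding $D_B$ gives exactly $(1,1),(-\theta,-\theta^3),(\theta^2,\theta^2),(-\theta^3,-\theta)$.
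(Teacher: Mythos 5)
Your proposal is correct on the stated hypotheses, and in parts \cref{item: lem: 2 does not divide deltaA deltaB}--\cref{item: lem: pe2 does not divide deltaA deltaB} it travels a genuinely different road from the paper. The paper proves \cref{item: lem: prime ideals dividing delta} for every prime at once (including $\pe_2$) by using \cref{lem: elliptic curve}\cref{item: lem: x-T map} to force $v_\pe(D_A)$ and $v_\pe(D_B)$ to be even, so that $v_\pe(\gcd(A,B))\geq 2$ swamps the single factor $\beta_2$; you instead extract the needed parity from $AB=Y^4$ and postpone $\pe_2$ to \cref{item: lem: pe2 does not divide deltaA deltaB}, which is legitimate and avoids any circularity. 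Your local computation at the ramified prime (the case $23\nmid D$) is the best part and checks out: writing $\bar\theta$ for the $\pe_2$-adic conjugate of $\theta$, the two conjugate differences $x-\theta$ and $x-\bar\theta$ have equal valuation, are $\geq 1$ by hypothesis, and cannot both be $\geq 2$ because $v_{\pe_2}(\theta-\bar\theta)=v_{\pe_2}(f'(\theta))=v_{\pe_2}(\beta_2)=1$; hence $v_{\pe_2}(AB)=2\not\equiv 0\pmod 4$. This replaces the paper's completing-the-square valuation count and is more transparent. Part \cref{item: lem: candidates of deltaA deltaB} matches the paper. The one fragile point is that you dispose of \cref{item: lem: 2 does not divide deltaA deltaB}, \cref{item: lem: pe1 does not divide deltaA deltaB}, and the $23\mid D$ branch of \cref{item: lem: pe2 does not divide deltaA deltaB} by reducing \cref{eq:eliminatingdenominators} modulo $p$ and contradicting $(X,Y)=1$. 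That coprimality is written into the setup, so you may formally cite it, but the paper's own proofs of these parts use only $(X,Z)=1$ together with explicit $2$-adic and $\pe_i$-adic valuation comparisons, and for good reason: from the equation alone one only gets $\gcd(X,Y)\mid D^6$, and while $2\nmid\gcd(X,Y)$ does follow from the parity of $X'^3-X'+1$ (which is exactly the paper's argument for \cref{item: lem: 2 does not divide deltaA deltaB}), the claim $23\nmid\gcd(X,Y)$ is not automatic --- $v_{23}(X)=2$ with $X/(D^2Z^4)$ congruent to the simple root of $t^3-t+1$ modulo a high power of $23$ survives every congruence test. So at $p=23$ your shortcut borrows an assertion whose justification needs essentially the valuation work being skipped; to make the argument self-contained you should replace those reductions by direct valuation counts (your $\pe_2$-adic analysis already does this when $23\nmid D$, and an analogous count at $\pe_1$ and at $\pe_2$ when $23\mid D$ would complete the job along the lines of the paper's proof).
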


\begin{proof}
    \cref{item: lem: prime ideals dividing delta}
    Assume that $\pe$ divides either $D_A$ or $D_B$.
    Since $D_A$ and $D_B$ are fourth power free and the product $D_A D_B$ is in $(K^\times)^4$, both $D_A$ and $D_B$ are divided by $\pe$.
    For two elements $a,b\in \calO_K$, let $\gcd(a,b)$ be the largest ideal dividing both $a$ and $b$.
    Since we have the relation
    \[
        \gcd(D_A, D_B) | \gcd(A,B) | D^4 (3\theta^2 - 1) Z^8,
    \]
    $\pe$ divides $D$, $3\theta^2-1$, or $Z$.
    Since the valuations $v_{\pe}(D_A)$ and $v_{\pe}(D_B)$ are even by \cref{lem: elliptic curve} \cref{item: lem: x-T map},
    $\pe$ divides $D^4(3\theta^2 - 1)Z^8$ twice, and hence $\pe$ divides $D$ or $Z$.
    The equality $X = A + D^2 \theta Z^4 = D_A s_A^4 + D^2 \theta Z^4$ implies that $\pe$ divides $X$.
    Since $Z$ is coprime to $X$, $\pe$ does not divide $Z$, and hence $\pe$ divides $D$.
    
    \cref{item: lem: 2 does not divide deltaA deltaB}
    Assume the contrary.
    If $4$ does not divide $X$, then the $2$-adic valuations of both sides of \cref{eq:eliminatingdenominators} are distinct.
    Thus, $4$ divides $X$.
    Put $X' = X/4$. We have
    \[
        Y^4 = 64(X'^3 - (D/2)^4 X' Z^8 + (D/2)^6 Z^{12}).
    \]
    Again, comparing the $2$-adic valuations of both sides, we see that $4$ divides $Y$.
    Put $Y' = Y/4$. Noting that $D/2$ and $Z$ are odd integers, we have the following congruence relations
    \begin{align*}
        0 &\equiv 4Y' = (X')^3 - (D/2)X'Z^8 + (D/2)^6 Z^{12}\\
        &\equiv (X')^3 - X' + 1.
    \end{align*}
    However, $2$ does not divide the value $(X')^3 - X' + 1$ for all integers $X'$, which is a contradiction.

    \cref{item: lem: pe1 does not divide deltaA deltaB}
    Assume the contrary.
    As in \cref{item: lem: 2 does not divide deltaA deltaB}, the valuations $v_{\pe_1}(D_A)$ and $v_{\pe_1}(D_B)$ are even and positive.
    Since $D_A$ and $D_B$ are fourth power free, we have $v_{\pe_1}(D_A)= v_{\pe_1}(D_B) = 2$.
    By \cref{item: lem: prime ideals dividing delta}, $\pe_1$ divides $D$ and $X$.
    By the definition of $D$, the valuation $v_{\pe_1}(D)$ equals $1$ in this situation.
    Since $X$ and $Z$ are prime to each other, $Z$ is not divided by $\pe_1$.
    We have the equation
    \begin{equation} \label{eq:defofdeltaBsB}
        D_B s_B^4 = X^2 + D^2\theta XZ^4 + D^4(\theta^2 - 1)Z^8
    \end{equation}
    by the definition of $D_B$ and $s_B$.
    By dividing by $D^4Z^8$ and completing the square, we get
    \begin{equation} \label{eq:completingthesquare}
        D_B \left( \frac{s_B}{D Z^2}\right)^4 - \frac{3\theta^2 - 4}{4} = \left(\frac{X}{D^2 Z^4} + \frac{\theta}{2}\right)^2.
    \end{equation}
    If $v_{\pe_1}(s_B) > 0$, the $\pe_1$-adic valuation of both sides of \cref{eq:completingthesquare} are distinct.
    Thus, we obtain $v_{\pe_1}(s_B) = 0$.
    Moreover, since we have
    \begin{align*}
        v_{\pe_1}(\text{left hand side of \cref{eq:defofdeltaBsB}}) &= v_{\pe_1}(D_B) = 2,\\
        v_{\pe_1}(\text{right hand side of \cref{eq:defofdeltaBsB}}) &
        \begin{cases}
            = 0 & \text{if } v_{\pe_1}(X) = 0, \text{ and}\\
            \geq 4 & \text{if } v_{\pe_1}(X) \geq 2,
        \end{cases}
    \end{align*}
    we obtain $v_{\pe_1}(X) = 1$.
    On the other hand, by the definition of $D_A$ and $s_A$, we have
    \begin{equation} \label{eq:defofdeltaAsA}
        D_A s_A^4 = X- D^2 \theta Z^4.
    \end{equation}
    In our situation, we get
    \begin{align*}
        v_{\pe_1}(\text{left hand side of \cref{eq:defofdeltaAsA}})
        &\equiv 2\ \mod 4, \text{ and}\\
        v_{\pe_1}(\text{right hand side of \cref{eq:defofdeltaAsA}}) &= 1.
    \end{align*}
    This is a contradiction.
    
    \cref{item: lem: pe2 does not divide deltaA deltaB}
    Assume the contrary.
    As in the argument of the proof of \cref{item: lem: pe1 does not divide deltaA deltaB}, we have $v_{\pe_2}(D_A) = v_{\pe_2}(D_B) = 2$.
    By \cref{item: lem: pe1 does not divide deltaA deltaB}, we have $v_{\pe_1}(D_A) = v_{\pe_1}(D_B) = 0$.
    Since $X$ is a rational integer, and the ramification degree $e_{\pe_2/23}$ is $2$, we have
    \[
        v_{23}(X) = \frac{1}{2}v_{\pe_2}(X).
    \]
    By \cref{item: lem: prime ideals dividing delta}, they are positive integers.
    Thus, $v_{\pe_2}(X)$ is a positive even integer.
    If $v_{\pe_2}(X) \geq 3$ holds, we have
    \begin{align*}
        v_{\pe_2}(\text{left hand side of \cref{eq:defofdeltaBsB}}) &\equiv 2 \ \mod 4, \text{ and}\\
        v_{\pe_2}(\text{right hand side of \cref{eq:defofdeltaBsB}}) &= 4.
    \end{align*}
    This is a contradiction. Thus, the value $v_{\pe_2}(X)$ must be equal to $2$, and we get
    \[
        v_{\pe_1}(X) = v_{23}(X) = \frac{1}{2}v_{\pe_2}(X) = 1.
    \]
    Then, we obtain the equalities
    \begin{align*}
        v_{\pe_1}(\text{left hand side of }\cref{eq:completingthesquare}) &\equiv 0\ \mod 4, \text{ and}\\
        v_{\pe_1}(\text{right hand side of }\cref{eq:completingthesquare}) &= 2.
    \end{align*}
    This is a contradiction.

    \cref{item: lem: candidates of deltaA deltaB}
    By \cref{item: lem: prime ideals dividing delta}, only the prime ideal dividing $D$ can divide $D_A$ or $D_B$.
    Such prime ideals are $2\calO_K$, $\pe_1$, or $\pe_2$.
    \cref{item: lem: 2 does not divide deltaA deltaB}, \cref{item: lem: pe1 does not divide deltaA deltaB}, and \cref{item: lem: pe2 does not divide deltaA deltaB} implies that they do not divide $D_A$ and $D_B$.
    Hence, $D_A$ and $D_B$ are both units of $\calO_K$.
    The assertion follows from \cref{lem: elliptic curve} \cref{item: lem: x-T map} and \cref{lem: number field} \cref{item: lem: unit group}.
    \end{proof}

\begin{continuedproof}{Proof of \cref{thm: rational points of CD}}
Let $C_D^{\circ}$, $C_{D_A,D_B}$, and $C_{D_B}'$ be the curves defined as follows
\begin{align*}
    & C_D^{\circ} : y^4 = x^3 - D^4x + D^6\\
    & C_{D_A,D_B} :
    \begin{cases}
        D_As_A^4 = x - D^2\theta\\
        D_Bs_B^4 = x^2+ D^2\theta x + D^4(\theta^2 -1)\\
        D_AD_B = s^4
    \end{cases}\\
    & C_{D_B}' \colon y^2 = D_B x^4 - (3\theta^2 - 4)/4.
\end{align*}
Then, we have the following morphisms
\begin{center}
    \begin{tikzcd}
    	C_D & \ni (\frac{x}{D^2},\frac{s_A s_B s}{D^2}) & (x,y) \\
    	C_D^\circ & \ni (x, s_A s_B s) & (D^2x, D^2y) \\
    	C_{D_A,D_B} & \ni(x, s_A, s_B, s) \\
    	C_{D_B}' & \ni(\frac{s_B}{D}, \frac{x}{D^2}+ \frac{\theta}{2}) & (x,y) \\
    	\mathbb{P}^1 & \ni \frac{x}{D^2} & y-\frac{\theta}{2}
    	\arrow[maps to, from=1-3, to=2-3]
    	\arrow["\sim", from=2-1, to=1-1]
    	\arrow[maps to, from=2-2, to=1-2]
    	\arrow[from=3-1, to=2-1]
    	\arrow[from=3-1, to=4-1]
    	\arrow[maps to, from=3-2, to=2-2]
    	\arrow[maps to, from=3-2, to=4-2]
    	\arrow["\xi", from=4-1, to=5-1]
    	\arrow[maps to, from=4-3, to=5-3]
        \arrow[maps to, from=4-2, to=5-2]
    \end{tikzcd}
\end{center}
For all rational points $P\in C_D(\Q)$, there is a tuple $(D_A,D_B) = (1,1), (-\theta, -\theta^3), (\theta^2,\theta^2), (-\theta^3, -\theta)$ such that $P$ is an image of a $K$-rational point $(x,s_A,s_B,s) \in C_{D_A,D_B}(K)$ with $x\in \Q$.
Let $P'\in C_{D_B}'(K)$ be the image of $(x,s_A,s_B,s)$.
Then, the value $\xi(P')$ is a rational number.
Hence, we can solve our task by the following procedure using \textsc{Magma}.
\begin{enumerate}
    \item \label{item: thm: find a K rational point}
        Find a $K$-rational point $P_0\in C_{D_B}'(K)$ by using the command \texttt{RationalPoints}.
        Using the command \texttt{EllipticCurve} for $C_{D_B'}$ and $P_0$, we obtain an elliptic curve $E_{D_B}$ and an isomorphism $\varphi_{D_B}\colon C_{D_B}' \longrightarrow E_{D_B}$ which sends $P_0$ to the unit element $O$.
    \item \label{item: thm: finite index subgroup}
        Get an abstract abelian group $G$ and an embedding $m\colon G \hookrightarrow E_{D_B}(K)$ such that the index of its image in $E_{D_B}(K)$ is finite and coprime to $6$.
    \item \label{item: thm: Elliptic Chabauty}
        Use the command \texttt{Chabauty} for $m$ and $\xi\circ \varphi_{D_B}^{-1} \colon E_{D_B} \longrightarrow \bbP^1$ with index bound $6$.
        It tells us the set
        \[
            S = \left\{ g \in G \ \middle|\ \xi \circ \varphi_{D_B}^{-1}\circ m(g) \in \bbP^1(\Q)\right\}
        \]
        and an integer $I$ whose prime divisors are only $2$ or $3$.
        The output $I$ means that if the index $[E_{D_B}(K): m(G)]$ is coprime to $I$, the set $S$ equals the set of all points of $E_{D_B}(K)$ whose image under $\xi\circ \varphi_{D_B}^{-1}$ is in $\bbP^1(\Q)$.
        Since we constructed $G$ as its index is coprime to $6$ in \cref{item: thm: finite index subgroup}, this is the case if the computation stops.
\end{enumerate}
We write the computational code in \cref{appendix: sec: code} for $D_B = 1$. One can check the other cases by changing $n$ to $ 1$, $2$, and $3$ in the code.
By these computations, for $D_B = (-\theta)^n$ with $0\leq n \leq 3$, we obtain that the set
\[
    S' = \left\{ Q \in C_{D_B}'(K)\ |\ \xi(Q) \in \bbP^1(\Q) \right\}
\]
is as follows
\[
    S' =
    \begin{cases}
        \{ [1 , -1 , 0], [\theta , (\theta - 2)/2 , 1], [1 , 1 , 0], [-\theta , (\theta - 2)/2 , 1]\} & n= 0\\
        \{[-\theta^2 + 1 , (\theta + 2)/2 , 1],
        [\theta^2 - 1 , (\theta + 2)/2 , 1)\} & n= 1\\
        \emptyset & n= 2\\
        \{[\theta^2 - 1 , \theta/2 , 1],
        [-\theta^2 + 1 , \theta/2 , 1]\} & n= 3\\
    \end{cases}
\]
where we regard $C_{D_B}'$ is embedded into the weighted projective space $\bbP(1,2,1)$.
The image of these points under $\xi$ is either $0$ or $\pm 1$.
This means that the original point $P=(x,y) \in C_D(\Q)$ satisfies $x\in \{0, \pm 1\}$.
Hence, we obtain the equality
\[
    x^3 - x + 1 = 1.
\]
Consequently, the value $D$ must be $\pm 1$, and our assertion holds.
\end{continuedproof}


\subsection{Completion of proof}
\label{subsec: proof of main theorem for d=2}


Now, we are ready to prove our main theorem for $d=2$.
\begin{proof}[Proof of \cref{thm: main}\cref{item: thm: d = 2}]
    Let notation be as in \cref{subsec: geometry}.
    By \cref{subsec: reduction to CD}, for any rational point $x\in X_4(\Q)$, the point $\pi(x)$ equals $O$, $3Q_0$, or $\varphi \circ \psi_D(u,t)$ for some $D = (-1)^{\varepsilon_0} 2^{\varepsilon_1} 23^{\varepsilon_2}$, $(u,t) \in C_D(\Q)$, and $\varphi = [2]$ or $T_{3Q_0}\circ [2]$.
    Assume that it is in the latter case.
    By \cref{thm: rational points of CD}, the value $D$ must be $\pm 1$, and $(u,t)$ is in $\{(0,\pm 1), (\pm 1, \pm 1)\}$.
    The image of each $(u,t)$ in $E(\Q)$ is as follow:

\begin{center}
    \begin{tikzcd}
    	E & E & C_1 & E & E & C_{-1} \\
    	-2Q_0 & (1,1)=-Q_0 & (1,\pm1) & 2Q_0 & (1,-1)=Q_0 & (1,\pm1) \\
    	Q_0 &&& 5Q_0 \\
    	-4Q_0 & (-1,1)=-2Q_0 & (-1,\pm1) & 4Q_0 & (-1,-1)=2Q_0 & (-1,\pm1) \\
    	-Q_0 &&& 7Q_0 \\
    	6Q_0 & (0,1) = 3Q_0 & (0,\pm1) & -6Q_0 & (0,-1) = -3Q_0 & (0,\pm1) \\
    	9Q_0 &&& -3Q_0
    	\arrow["\varphi"', from=1-2, to=1-1]
    	\arrow["{\psi_1}"', from=1-3, to=1-2]
    	\arrow["\varphi"', from=1-5, to=1-4]
    	\arrow["{\psi_{-1}}"', from=1-6, to=1-5]
    	\arrow["{[2]}"', maps to, from=2-2, to=2-1]
    	\arrow["{T_{3Q_0} \circ [2]}", maps to, from=2-2, to=3-1]
    	\arrow[maps to, from=2-3, to=2-2]
    	\arrow["{[2]}"', maps to, from=2-5, to=2-4]
    	\arrow["{T_{3Q_0} \circ [2]}", maps to, from=2-5, to=3-4]
    	\arrow[maps to, from=2-6, to=2-5]
    	\arrow["{[2]}"', maps to, from=4-2, to=4-1]
    	\arrow["{T_{3Q_0} \circ [2]}", maps to, from=4-2, to=5-1]
    	\arrow[maps to, from=4-3, to=4-2]
    	\arrow["{[2]}"', maps to, from=4-5, to=4-4]
    	\arrow["{T_{3Q_0} \circ [2]}", maps to, from=4-5, to=5-4]
    	\arrow[maps to, from=4-6, to=4-5]
    	\arrow["{[2]}"', maps to, from=6-2, to=6-1]
    	\arrow["{T_{3Q_0} \circ [2]}", maps to, from=6-2, to=7-1]
    	\arrow[maps to, from=6-3, to=6-2]
    	\arrow["{[2]}"', maps to, from=6-5, to=6-4]
    	\arrow["{T_{3Q_0} \circ [2]}", maps to, from=6-5, to=7-4]
    	\arrow[maps to, from=6-6, to=6-5]
    \end{tikzcd}
\end{center}

Hence, combining them with the first two cases $\pi(x)= O$ or $3Q_0$, the point $\pi(x)$ is in the set
\[
    S = \{ O, \pm Q_0, \pm 2Q_0, \pm 3Q_0, \pm 4Q_0, 5Q_0, \pm 6Q_0, 7Q_0, 9Q_0\}.
\]
By direct calculation, we conclude that
\begin{equation*}
    X_4(\Q) = \pi^{-1}(S) \cap X_4(\Q) = \{P_i \ |\ 1\leq i \leq 10\},
\end{equation*}
where $P_i$'s are the points listed in \cref{lem: elliptic curve} \cref{item: lem: known points}.
\end{proof}


\section{Proof of main results for $d \geq 3$} \label{sec: d geq 3}

We prove \cref{thm: main} \cref{item: thm: d geq 3} by reducing the problem to the following theorem.

\begin{thm}[{\cite[Main Theorem 2.]{DM97}}]\label{thm: Darmon-Merel}
    The equation
    \begin{equation} \label{eq:Darmon--Merel}
        x^n + y^n = z^2
    \end{equation}
    has no non-trivial primitive solution when $n\geq 4$.
\end{thm}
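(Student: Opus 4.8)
The plan is to deploy the modular method, first reducing to prime exponents. A nontrivial primitive solution with $4 \mid n$ yields one for $n = 4$, which is excluded by Fermat's classical infinite descent for $x^4 + y^4 = z^2$, while an odd prime $p \mid n$ yields a solution with exponent $p$. Thus it suffices to treat $n = p$ an odd prime. The exponent $p = 3$ is handled separately by the classical determination of the finitely many primitive points on $x^3 + y^3 = z^2$; this in turn clears the residual exponents built only from $2$ and $3$, such as $6$ and $9$, once one checks that none of those solutions has coordinates that are simultaneously perfect squares, respectively cubes. The principal and genuinely new case is $p \geq 5$.

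For a putative primitive solution $a^p + b^p = c^2$ with $p \geq 5$, after normalizing signs and parities I would attach the Frey curve
\[
    E \colon Y^2 = X\bigl(X^2 + 2cX + b^p\bigr),
\]
whose discriminant is $a^p b^{2p}$ times a power of $2$. At every odd prime $\ell \mid ab$ the curve has multiplicative reduction with $p \mid v_\ell(\Delta_E)$, so the mod-$p$ representation $\bar\rho_{E,p} \colon \Gal(\overline{\Q}/\Q) \to \mathrm{GL}_2(\bbF_p)$ is unramified at $\ell$. Consequently the prime-to-$p$ conductor of $\bar\rho_{E,p}$ is a power of $2$ that does not depend on the solution.

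By the modularity of elliptic curves over $\Q$ the representation $\bar\rho_{E,p}$ is modular, and Mazur's analysis of rational points on the modular curves $X_0(p)$ and $X_{\mathrm{split}}(p)$ shows it is irreducible for $p \geq 5$. Ribet's level-lowering theorem then produces a weight-$2$ newform of the fixed level $N = 2^s$ congruent to $E$ modulo $p$. I would finish by eliminating the finitely many newforms of level $N$: each is attached to an elliptic curve (possibly with complex multiplication), and a contradiction follows by comparing local data at $2$, where the Frey curve carries a rational $2$-torsion point and has a reduction type incompatible with the surviving forms.

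The hard part is making this elimination uniform across all $p \geq 5$ at once, rather than prime by prime, and disposing of the configurations in which a complex-multiplication newform of level $N$ would otherwise persist. This is precisely the role of the winding quotient: the relevant newforms are governed by the quotient $J_e$ of $J_0(N)$ cut out by the winding element, and the combination of Merel's modular-symbol computation with the Kolyvagin--Logachev theorem shows that $J_e$ has Mordell--Weil rank $0$, because the corresponding $L$-values at $s = 1$ are nonzero. This rank-$0$ input bounds the rational points a solution could produce and closes the argument uniformly in $p$; the finitely many remaining small primes are then dispatched by direct inspection of $S_2(\Gamma_0(N))$.
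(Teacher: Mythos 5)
The paper does not actually prove this statement: it is quoted verbatim from Darmon--Merel \cite{DM97}, and the only accompanying text is the remark that the modularity theorem of Wiles, Taylor--Wiles, and Breuil--Conrad--Diamond--Taylor renders the conclusion unconditional. So your sketch must be measured against the argument of \cite{DM97} itself. Its skeleton is right --- reduction to prime exponent, the Frey curve $Y^2 = X(X^2+2cX+b^p)$, modularity, irreducibility, level lowering to a level supported at $2$, and a winding-quotient/Kolyvagin--Logachev input --- but the step where you claim to ``finish by eliminating the finitely many newforms of level $N$ \ldots\ by comparing local data at $2$'' is exactly where the theorem is hard, and as written it fails. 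At the lowered level (which is $32$ after the standard normalization) there \emph{is} a newform: the CM form attached to $y^2 = x^3 - x$. No comparison of reduction types, $2$-torsion, or any other local data can rule out a congruence $\overline{\rho}_{E,p} \simeq \overline{\rho}_{F,p}$ with this CM curve $F$; that congruence is not a residual nuisance to be ``dispatched by direct inspection of $S_2(\Gamma_0(N))$'' --- it is the entire problem, and it persists for every $p$.

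Your last paragraph names the right tools but attaches them to the wrong object: the winding quotient is \emph{not} a quotient of $J_0(N)$ with $N = 2^s$ fixed --- indeed $J_0(32)$ is the one-dimensional CM abelian variety itself, and its winding quotient carries no information. What Darmon--Merel actually do is this: the CM congruence forces the image of $\overline{\rho}_{E,p}$ into the normalizer of a Cartan subgroup (split if $p \equiv 1 \bmod 4$, nonsplit if $p \equiv 3 \bmod 4$), so the Frey curve yields a rational point on $X_{\mathrm{split}}(p)$, respectively $X_{\mathrm{nonsplit}}(p)$; since $X_{\mathrm{split}}(p) \cong X_0(p^2)$ and the Jacobian of $X_{\mathrm{nonsplit}}(p)$ is isogenous to the new part of $J_0(p^2)$, both cases are governed by the winding quotient $J_e$ of $J_0(p^2)$ --- a level growing with $p$, which is where the uniformity genuinely lives --- whose Mordell--Weil rank is $0$ by Merel's winding-element computation together with Kolyvagin--Logachev. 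Even then one needs a further Mazur-style formal-immersion argument to convert rank $0$ into the statement that the relevant rational points are cusps or CM points, and the contradiction comes from the Frey curve having non-integral $j$-invariant (an odd prime of multiplicative reduction) once $ab \neq \pm 1$; none of this is ``bounding the rational points'' in the soft sense you describe. Two smaller inaccuracies: the modular argument requires $p \geq 7$, not $p \geq 5$ (irreducibility for a curve with a rational point of order $2$ can fail at $p = 5$, and $J_0(25) = 0$, so the method is empty there), hence $n = 5$ joins $n = 4, 6, 9$ among the separately cited classical cases; and $x^3 + y^3 = z^2$ has \emph{infinitely} many primitive solutions, falling into three parametric families, so your ``finitely many primitive points'' is false, and verifying that no member of those families has square (respectively cube) coordinates is itself a nontrivial descent rather than a finite check.
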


Note that a solution $(x,y,z)$ of the equations in \cref{thm: Darmon-Merel} is said to be primitive if $x,y,z$ are integers and $\gcd(x,y,z) = 1$ holds, and to be trivial if $xyz = 0$ or $\pm 1$.
Since the Shimura--Taniyama Conjecture, the modularity of every elliptic curve over $\Q$, is proved by Wiles, Taylor, Breuil, Conrad, and Diamond in \cite{Wiles95}, \cite{TW95}, \cite{CDT99}, and finally \cite{BCDT99}, the conclusion of \cref{thm: Darmon-Merel} is now unconditionally valid.

For the case $d=3$ or $4$, we use the following proposition on the Mordell--Weil group of specific elliptic curves.
\begin{prop}\label{prop: elliptic curves y2 = x3 pm 1}
    \begin{parts}
        \item \label{item: prop: elliptic curve for d = 3}
            The Mordell--Weil group of the elliptic curve defined by
            \[
                y^2 = x^3 - 1.
            \]
            is isomorphic to $\Z/2\Z$, and consists of $[0, 1, 0]$ and $[1, 0, 1]$ in the homogeneous coordinate.
        \item \label{item: prop: elliptic curve for d = 4}
            The Mordell--Weil group of the elliptic curve defined by
            \[
                y^2 = x^3 + 1.
            \]
            is isomorphic to $\Z/6\Z$, and consists of the points
            \[
                [0 , 1 , 0], [2 , \pm 3 , 1],[0 , \pm 1 , 1],\text{ and } [-1 , 0 , 1]
            \]
            in the homogeneous coordinate.
    \end{parts}
\end{prop}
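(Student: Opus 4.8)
The goal is to determine the Mordell--Weil groups of the two elliptic curves $y^2 = x^3 \pm 1$. These are extremely well-studied curves, so the plan is to reduce everything to a rank computation combined with torsion analysis, which can be carried out by standard descent.

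First I would handle the torsion subgroups. For $y^2 = x^3 - 1$, the rational $2$-torsion is detected by the rational roots of $x^3 - 1$, giving only $x = 1$, hence the point $(1,0)$; together with $O = [0,1,0]$ this produces a subgroup of order $2$. For $y^2 = x^3 + 1$, the point $(-1,0)$ gives a $2$-torsion point, while $(2, \pm 3)$ and $(0, \pm 1)$ are visibly rational; a direct check (or the duplication/addition formulas) shows these assemble into a cyclic group of order $6$. To be sure these exhaust the torsion, I would apply the Nagell--Lutz theorem together with reduction modulo several good primes $p$ (e.g. $p = 5, 7, 11$ for each curve): since $E(\Q)_{\mathrm{tors}}$ injects into $E(\mathbb{F}_p)$ for primes of good reduction, comparing $\#E(\mathbb{F}_p)$ across a few primes pins the torsion down exactly as $\Z/2\Z$ and $\Z/6\Z$ respectively.

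\smallskip

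The substantive step is to show each curve has Mordell--Weil rank zero, so that the group equals its torsion. For this I would perform a $2$-descent (or a $2$-isogeny descent, since each curve carries a rational $2$-torsion point). The descent bounds the rank via the size of certain Selmer groups, computed from local solubility conditions at the finitely many bad primes and at the infinite place; both curves have very small conductor ($36$ and $36$-type primes, i.e. only $2$ and $3$ are bad), so these computations are short. I expect the Selmer groups to be trivial (beyond the contribution forcing the known torsion), yielding rank $0$ in each case. Alternatively, and more in the spirit of the rest of the paper, one can simply invoke \textsc{Magma}'s \texttt{MordellWeilGroup} or \texttt{RankBound} routines, which implement exactly this descent.

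\smallskip

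The main obstacle, such as it is, lies entirely in the rank-zero verification: the torsion computation is routine, but establishing rank $0$ requires the descent argument, and one must ensure the Selmer group computation is unconditional (no reliance on BSD). Since these two curves are classical with known analytic rank $0$ and verified algebraic rank $0$, and since the $2$-descent here terminates without needing any conjecture, this causes no real difficulty. Once rank $0$ is in hand, the Mordell--Weil group coincides with the torsion subgroup identified above, giving $\Z/2\Z$ with the listed points for \cref{item: prop: elliptic curve for d = 3} and $\Z/6\Z$ with the listed points for \cref{item: prop: elliptic curve for d = 4}, completing the proof.
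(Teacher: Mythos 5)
Your proposal is correct and matches the paper's intent: the paper simply cites the LMFDB entries (curves 144.a3 and 36.a4) and remarks that the assertion can be verified by elementary number theory or \textsc{Magma}, and your sketch (torsion via Nagell--Lutz and reduction modulo good primes, rank $0$ via an unconditional $2$-isogeny descent using the rational $2$-torsion point on each curve) is exactly the standard argument underlying those references. No gaps; the only nitpick is that the conductors are $144$ and $36$ respectively, though your real point --- that the only bad primes are $2$ and $3$ --- is right.
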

\begin{proof}
    See the entry in the LMFDB \cite[Elliptic Curve 144.a3 and 36.a4]{lmfdb}.
    One can prove the assertion by using elementary number theory or \textsc{Magma}.
\end{proof}

\begin{proof}[Proof of \cref{thm: main}\cref{item: thm: d geq 3}]
    Suppose $c \neq 0, -1$.
    Assume that we have $z_2 \in f_{d,c}^{-2}(0)(\Q)$, and put $z_1= f_{d,c}(z_2)$.
    Then, we have
    \[
        c = -z_1^d \neq 0
    \]
    by $f_{d,c}(z_1) = 0$.
    By the definition of $z_1$, we have
    \begin{equation} \label{eq:curvefordgeq3}
        \left(\frac{z_2}{z_1}\right)^d = \left(\frac{1}{z_1}\right)^{d-1} + 1.
    \end{equation}
    
    If $d=3$, the point $(z_2/z_1, 1/z_1)$ is a rational point of the elliptic curve defined by
    \[
        y^2 = x^3 - 1.
    \]
    Thus, the point $(z_2/z_1, 1/z_1)$ equals $[0, 1, 0]$ or $[1, 0, 1]$ by \cref{prop: elliptic curves y2 = x3 pm 1}\cref{item: prop: elliptic curve for d = 3}.
    However, this is impossible since we are assuming $-z_1^d = c \neq 0$.

    If $d = 4$, the point $(1/z_1, (z_2/z_1)^2)$ is a rational point of the elliptic curve defined by
    \[
        y^2 = x^3 + 1.
    \]
    Thus, the point $(1/z_1, (z_2/z_1)^2)$ equals either
    \[
        [0 , 1 , 0],\ [2 , \pm 3 , 1],\ [0 , \pm 1 , 1],\text{ or } [-1 , 0 , 1]
    \]
    by \cref{prop: elliptic curves y2 = x3 pm 1}\cref{item: prop: elliptic curve for d = 4}.
    However, this is impossible since $\pm 3$ are not squares of rationals, and we are assuming $-z_1^d = c \neq 0$.
    
    Let $d \geq 5$.
    By comparing the valuations of both sides of \cref{eq:curvefordgeq3} over all prime numbers, we can see that there are non-zero integers $A, B, C$ with $(A, B)= (B, C) = 1$ such that
    \[
        \frac{z_2}{z_1} = \frac{A}{B^{d-1}}, \quad \frac{1}{z_1} = \frac{C}{B^d}.
    \]
    These $A, B, C$ satisfies the equality
    \[
        A^d = C^{d-1} + B^{d(d-1)}.
    \]
    If $d \geq 5$ is even, the point $(C,B^d, A^{d/2})$ is a non-trivial primitive solution of \cref{eq:Darmon--Merel} for $n=d-1$.
    If $d \geq 5$ is odd, the point $(A, -B^{d-1},C^{(d-1)/2})$ is a non-trivial primitive solution of \cref{eq:Darmon--Merel} for $n=d$.
    They are contradictions.
\end{proof}

\begin{proof}[Proof of \cref{cor: number of rational preimages}]
    Suppose that $d \geq 3$ is even.
    If $c = 0$, rational iterated preimages of $0$ are only $0$.
    If $c = -1$, we have
    \begin{align*}
        f_{d,-1}^{-1}(0)(\Q) &= \{1,-1\},\\
        f_{d,-1}^{-1}(1)(\Q) &= \emptyset, \text{ and}\\
        f_{d,-1}^{-1}(-1)(\Q) &= \{0\}.
    \end{align*}    
    If $c = -r^{d}$ for some rational number $r \neq 0, \pm 1$, we have
    \[
        f_{d,c}^{-1}(0)(\Q) = \{ \pm r\}.
    \]
    By \cref{thm: main}\cref{item: thm: d geq 3}, there are no second rational preimages of $0$ under $f_{d,c}$.
    For the remaining case that $c$ is not of the form $-r^d$ with a rational number $r$, the set $f_{d,c}^{-1}(0)(\Q)$ is empty.
    The assertion for even $d$ follows from these observations.

    Suppose that $d\geq 3$ is odd.
    If $c = 0$, the rational iterated preimages of $0$ are only $0$.
    If $c = -r^d$ for some rational number $r\neq 0$, we have
    \[
        f_{d,c}^{-1}(0)(\Q) = \{r\}.
    \]
    By \cref{thm: main} \cref{item: thm: d geq 3}, there are no rational second iterated preimages of $0$ under $f_{d,c}$.
    If $c$ is not of the form $-r^d$ with a rational number $r$, the set $f_{d,c}^{-2}(0)(\Q)$ is empty.
    The assertion for odd $d$ follows from these observations.
\end{proof}

\begin{appendix}
\section{Magma code for the elliptic Chabauty method}
\label{appendix: sec: code}
We write the code of \textsc{Magma} to determine the $K$-rational points of $C_{D_B}$ whose image under $\xi$ is in $\bbP^1(\Q)$.
We used it in the proof of \cref{thm: rational points of CD}.

\begin{magmacode}[caption={Magma code for the elliptic Chabauty method}]
Q := Rationals();
Qt<t> := PolynomialRing(Q);
P1 := ProjectiveSpace(Q,1);
K<a> := ext<Q | t^3 - t + 1>;
Kx<x> := PolynomialRing(K);

// list up primes to saturate
primes_to_saturate := [2,3];

// set n = 0..3
n := 0;
C<X,Y,Z> := HyperellipticCurve((-a)^n*x^4 - 1/4*(3*a^2 - 4));
P := RationalPoints(C : Bound := 10)[1];
E, phi := EllipticCurve(C, P);

T, piT := TorsionSubgroup(E);
IT := Invariants(T);
MWR := MordellWeilRank(E);

// collecting candidates by 2-descent
Covers, maps := TwoDescent(E);
pts := [];
for i in [1..#Covers] do
    PtsCi := Points(Covers[i] : Bound := 1);
    if #PtsCi gt 0 then
        _, phiCi := AssociatedEllipticCurve(Covers[i] : E := E);
        for Q in PtsCi do
            Append(~pts, phiCi(Q));
        end for;
    end if;
end for;

// picking up independent elements
S := [];
if #pts gt 0 then
    _ := IsLinearlyIndependent(pts);
    S := ReducedBasis(pts);

    // saturation for each p
    for p in primes_to_saturate do
        S := Saturation(S, p : TorsionFree := true);
    end for;
end if;

// found free rank and the flag
r_found := #S;
flag_fullrank := (r_found eq MWR);

// construction of A, imgs, phi
A := AbelianGroup([0 : i in [1..r_found]] cat IT);
imgs := S cat [ piT(g) : g in Generators(T) ];
m := hom< A -> E | a :-> &+[ Eltseq(a)[i]*imgs[i] : i in [1..#imgs] ]>;
if not flag_fullrank then;
    print "need more independence elements";
end if;

if flag_fullrank then;
    // definition of function Ecov
    xi := map<C->P1 | [Y - a/2*Z^2, Z^2]>;
    Ecov := phi^(-1)*xi;

    // run Chabauty method
    Chab, I := Chabauty(m, Ecov: IndexBound:=6);

    // wanted points in C_DB
    print "n = ", n;
    print "K-rational points of C_DB with rational image of xi";
    for c in Chab do;
        print (m*phi^(-1))(c);
    end for;
end if
\end{magmacode}

\end{appendix}

\bibliographystyle{alpha}
\bibliography{dynamic}

\end{document}